\documentclass{svmult-turin}

\usepackage{type1cm} 
\usepackage{enumitem,hyperref}
\usepackage{makeidx}         
\usepackage{graphicx}        

\usepackage[bottom]{footmisc}

\usepackage{newtxtext}      %
\usepackage{newtxmath}      

\usepackage{tikz}
\usepackage{standalone}



\begin{document}

\title*{Radon Transform: Dual Pairs and Irreducible Representations}
\titlerunning{Radon Transform: Dual Pairs and Irreducible Representations}

\author{Giovanni S. Alberti and Francesca Bartolucci
and Filippo De Mari and Ernesto De~Vito}
\authorrunning{G. S. Alberti, F. Bartolucci, F. De Mari, and E. De Vito}
\institute{G.S. Alberti \at Department of Mathematics \& MaLGa center,
    University of Genoa, Via Dodecaneso 35, 16146 Genova, Italy,
    \email{alberti@dima.unige.it} \and  F. Bartolucci \at Department of Mathematics, ETH Zurich, Raemistrasse 101, 8092 Zurich, Switzerland, \email{francesca.bartolucci@sam.math.ethz.ch}
    \and F. De Mari \at Department of Mathematics \& MaLGa center,
   University of Genoa, Via Dodecaneso 35, 16146 Genova, Italy,
   \email{demari@dima.unige.it} \and E. De Vito \at Department of Mathematics \& MaLGa center,
    University of Genoa, Via Dodecaneso 35, 16146 Genova, Italy,
    \email{ernesto.devito@unige.it}} 

\maketitle

\abstract{We illustrate the general point of view developed  in [SIAM J.\ Math.\ Anal., 51(6), 4356--4381] that can be described as a variation of  Helgason's theory of dual $G$-homogeneous pairs $(X,\Xi)$ and which allows us  to prove  intertwining properties and inversion formulae of many existing Radon transforms. Here we analyze in detail one of the important aspects in the theory of dual pairs, namely the injectivity of the map label-to-manifold $\xi\to\hat\xi$ and  we prove that it is a necessary condition for  the irreducibility of the quasi-regular representation of $G$ on $L^2(\Xi)$. We further  explain how the theory in [SIAM J.\ Math.\ Anal., 51(6), 4356--4381] applies to the classical Radon and X-ray transforms in $\mathbb R^3$.}

\keywords{Homogeneous spaces, Radon transform, dual pairs, square-integrable representations, inversion formula,
wavelets, shearlets. }

\section{Introduction}
The circle of ideas and problems that may be collectively named ``Radon transform theory''
was born at least a century ago~\cite{GAradon17} but still abounds with questions and new perspectives that range from very concrete computation-oriented tasks to geometric or representation theoretic  issues. We may describe the heart of the matter by paraphrasing Gelfand~\cite{GAgel60}:

\vskip0.1truecm
 ``{\it Let $X$ be some space and in it let there be given certain manifolds which we shall suppose to be analytic and dependent analytically on parameters
$\xi_1, \dots,\xi_k$, that is 
$\{\hat\xi(\xi)=\hat\xi(\xi_1, \dots,\xi_k)\}$.
With a function $f$  on $X$ we associate its integrals over these manifolds:
\[
{\mathcal R}f(\xi)=\int\limits_{\hat\xi}f(x)\D m_\xi(x).
\]
We then ask whether it is possible to determine $f$ knowing  the integrals
 ${\mathcal R}f(\xi)$.}''

\vskip0.1truecm Among the many generalizations and theorems that may
be subsumed in this basic, yet profound, mathematical sketch, it is
certainly worth mentioning Helgason's contribution,
inspired~\cite{GAhelgason13} by work of Fritz~John's, in turn
triggered by Radon's original result~\cite{GAradon17} dating back to
1917. In particular, Helgason developed the notion of dual pairs and
double fibrations, whereby (Lie) groups and homogeneous spaces thereof
stand at center stage. His basic observation comes by inspecting
John's inversion formula for the integral transform--nowadays the
prototypical Radon transform--defined by integration over planes in
$\mathbb R^3$. The inversion takes the form
\[
  f(\mathbf
    x)=-\frac{1}{8\pi^2}\Delta_{\mathbf x}\Bigl(\int\limits_{S^2}
  {\mathcal R}f(\mathbf n,\mathbf n\cdot
 \mathbf x)\D\mathbf n\Bigr),
\]
where
$(\mathbf n,t)\mapsto {\mathcal R}f(\mathbf
  n,t)$ is the function on $S^2\times\mathbb R$ given by the integral
of $f$ over the plane
$\hat\xi_{\mathbf n,t}=\{\mathbf x\in\mathbb
R^3: \mathbf n\cdot \mathbf x=t\}$,  
$\Delta_{\mathbf x}$ is the Laplacian and
$\D\mathbf n$ is the Riemannian measure on the sphere $S^2$. This formula,
observes Helgason~\cite{GAhelgason13}, ``{\it involves two dual
  integrations, ${\mathcal R}f$ is the integral over the set of points
  in a plane and then $\D\mathbf n$, the integral over
  the set of planes through a point.}'' Furthermore, the domain $X$ on
which the functions of interest are defined (here $X=\mathbb R^3$) and
the set $\Xi$ of relevant manifolds (here the two-dimensional planes)
are homogeneous spaces of the same group $G$, namely the group of
isometries of $\mathbb R^3$, and enjoy a sort of duality, well
captured by the differential-geometric notion of incidence that was
introduced by Chern~\cite{GAchern42}.

Helgason proceeds on developing this duality in group-theoretic terms,  emphasizing a remarkable formal symmetry, according to which  the objects of interest come naturally in pairs, one living in $X$ and its twin in $\Xi$. 
Most notably, each point $\xi\in\Xi$ (the pair $(\xi_1,\xi_2)=(\mathbf n,t)$ in our basic example) labels one of the actual submanifolds $\hat\xi$ of $X$ on which the relevant integrals  are to be taken (the plane $\hat\xi(\mathbf n,t)$). Conversely, with each point $x\in X$ it is natural to associate the ``sheaf\,'' of planes passing through it. In the example above, this is precisely the set
$\check{\mathbf x}=\{\hat\xi(\mathbf
  n,\mathbf x\cdot\mathbf
  n):\mathbf n\in S^2\}$ over which the integral of
${\mathcal R}f$ is taken.

In the abstract setting developed by Helgason, the whole construction enjoys natural properties as long as
the mappings $\xi\mapsto\hat\xi$ and $x\mapsto\check x$ are both
injective, requirement that is then built into the definition of dual
pair and expressed algebraically. Note that in the above
  example, the map $(\mathbf n,t) \mapsto \hat\xi(\mathbf n,t)$ is
  two-to-one and this lack of injectivity is reflected by the fact that
  ${\mathcal R}f$  is an even function.
The central object is of course the Radon transform
\[
{\mathcal R}f(\xi)=\int\limits_{\hat\xi}f(x)\,\D m_{\xi}(x)
\]
for integrable functions on $X$, where $m_{\xi}$ is a suitable measure on $\hat\xi$.

Utilizing a variation of this framework, which is recalled in full detail below, we have addressed~\cite{GAsiam} some issues that are naturally expressed in this language. Our main contribution (see Theorem~\ref{ernestoduflomoore}) is a general result concerning the ``unitarization'' of ${\mathcal R}$ from $L^2(X,\D x)$ to  $L^2(\Xi,\D\xi)$ and the fact that the resulting unitary operator intertwines the quasi regular representations $\pi$ and $\hat\pi$ of $G$ on $L^2(X,\D x)$ and $L^2(\Xi,\D\xi)$, respectively. This unitarization really means first pre-composing the closure of ${\mathcal R}$ with a suitable pseudo-differential operator and then extending this composition to a unitary map, as is done in the existing and well-known precedecessors of Theorem~\ref{ernestoduflomoore}, such as those in~\cite{GAhelgason99} and in~\cite{GAsolmon76}. The representations $\pi$ and $\hat\pi$ of course play a central role and are assumed to be irreducible, and $\pi$ is assumed to be square integrable (see assumptions~\ref{GAass:irreducible} and \ref{GAass:irreducible1} below).
The combination of unitary extension and  intertwining leads to an interesting inversion formula for the true Radon transform, see Theorem~\ref{GAgeneralinversionformula}.

Compared to~\cite{GAsiam}, the present article adopts a slightly different, though fully compatible, formalism in the sense that we take here  the point of view that seems most natural in applications. Indeed, the space $X$ where the signals of interest are defined and the set of submanifolds of $X$ where integrals are to be taken are both in the foreground, and the group $G$ of geometric actions that one wants to consider comes next, taylored to the problem at hand. In this regard, it is important to observe that,  in principle, there are many different realizations of $X$ as homogeneous space, and  the choice of $G$ is tantamount to choosing the particular set of transformations (or symmetries) that one wants to focus on. What is of course essential is that they are plentiful enough. As for the submanifolds, we observe that in most applications one has in mind a prototypical submanifold $\hat\xi_0$.  We thus choose and fix $\hat\xi_0$, which we refer to as the {\it root} submanifold, as the image of the base point $x_0\in X$  under the  action of some closed subgroup $H$ of $G$. Thus $\hat\xi_0=H[x_0]$,  
and the other submanifolds are obtained by   exploiting the fact that $X$ is a transitive $G$-space. This entails that   $X$ is covered with all the shifted versions of $\hat\xi_0$ by means of the geometric transformations given by the elements of $G$. 
Incidentally, in this way one often achieves  families of foliations, and in most cases this leads to a natural splitting of the parameters in $\Xi$, those that label the foliation and those that select the leaf in the foliation.

Although largely inspired by the work of Helgason, our approach is different in several ways that are discussed in detail in Section~\ref{GAframe}. His construction rests not only on the strict invariance of the measures on $X$, $\Xi$ and $\hat\xi_0$ (versus relative invariance as in our construction) but on the fact that 
the correspondence $\xi\to\hat\xi$ between ``labels'' in the transitive $G$-space $\Xi$ and submanifolds 
of $X$ is assumed to be injective. In the present article we investigate this issue in detail and focus on the subgroup
$\widetilde{H}$ of $G$ that fixes $\hat\xi_0$, in principle larger than $H$. We find (Proposition~\ref{GAnovantesimo}) that the map $\xi\to\hat\xi$ is injective if and only if $\widetilde{H}=H$ and we further show in Theorem~\ref{GAthm:main} that, under reasonable assumptions on $\widetilde{H}$,  if this equality fails,  then $\hat\pi$ cannot be irreducible. This implies that in order for assumption~\ref{GAass:irreducible1} to be fulfilled, one must choose $H$ as large as possible among those subgroups of $G$ that fill out $\hat\xi_0$ by acting on $x_0$. Our theory is then illustrated with the help of two examples, 
namely the classical Radon transform  and  the X-ray transform  in $\mathbb R^3$, both analyzed with the group ${\rm SIM}(3)$ of rotations, dilations and translations. Again, this is different from Helgason's standard choice,  the isometry group ${\rm M}(3)$.

The paper is organized as follows. In Section~\ref{GAframe} we set up the context and recall the main
results of~\cite{GAsiam}. In Section~\ref{GAdpi} we present  a rather detailed analysis of the relations existing between the objects naturally arising from an arbitrary choice of $H$ and those that come from the maximal choice $\widetilde{H}$. This  leads to the main contribution  of this work, namely the fact that a gap   between  $\widetilde{H}$ and $H$ implies that the quasi regular representation $\hat\pi$ of $G$ on $L^2(\Xi)$ cannot be irreducible. Section~\ref{GAexamples} illustrates our theory with two classical examples in three-dimensional Euclidean space.

\section{The framework}\label{GAframe}
In this section we introduce the setting and the main result of
\cite{GAsiam}.

\subsection{Notation}
We briefly introduce the notation. We set
$\mathbb R^{\times}=\mathbb R\setminus\{0\}$ and $\mathbb R^{+}=(0,+\infty)$. The Euclidean norm of a vector
$v\in\mathbb R^d$ is denoted by $|v|$ and its scalar product with $w\in\mathbb R^d$ 
by $v\cdot w$. For any $p\in[1,+\infty]$ we denote
by $L^p(\mathbb R^d)$ the Banach space of functions $f\colon\mathbb R^d\rightarrow\mathbb C$ that are $p$-integrable with respect to the Lebesgue measure $\D x$
and, if $p=2$, the corresponding scalar product and norm are
$\langle\cdot,\cdot\rangle$ and $\|\cdot\|$, respectively. If $E$
  is a Borel subset of $\mathbb R^d$, $|E|$ denotes its Lebesgue measure.
The Fourier transform is denoted by $\mathcal F$ both on
$L^2(\mathbb R^d)$ and on  $L^1(\mathbb R^d)$, where it is 
defined by
\[
\mathcal F f({\omega})= \int\limits_{\mathbb R^d} f(x) \E^{-2\pi i\,
  {\omega}\cdot x } \D{x},\qquad f\in L^1(\mathbb R^d).
\] 
If $G$ is a locally compact second countable (lcsc) group, we denote
by $L^2(G,\mu_G)$ the Hilbert space of square-integrable functions with
respect to a left Haar measure $\mu_G$ on $G$. If $X$ is a lcsc transitive
$G$-space with origin $x_0$, we denote by $g[x]$ the action of $G$ on
$X$. A Borel measure $\nu$ on $X$ is relatively invariant if there
exists a positive character $\alpha$ of $G$ such that for any
measurable set $E\subseteq X$ and $g\in G$ it holds
$\nu(g[E])= \alpha(g)\nu(E)$. Furthermore, a Borel section is a
measurable map $s\colon X\to G$ satisfying $s(x)[x_0]=x$ and
$s(x_0)=e$, with $e$ the neutral element of $G$; a Borel section
always exists since $G$ is second countable
\cite[Theorem~5.11]{GAvaradarajan85}.  We denote the (real) general
linear group of size $d\times d$ by ${\rm GL}(d,\mathbb R)$.

Given two unitary
representations $\pi,\hat{\pi}$ of $G$   acting on two Hilbert spaces $\mathcal H$ and
$\hat{\mathcal H}$, respectively, a densely defined closed operator
  $T\colon\mathcal{H}\to \hat{\mathcal{H}}$ is called semi-invariant 
  with weight $\zeta$ if it satisfies  
\begin{align}\label{semiinvariant}
\hat{\pi}(g)T\pi(g)^{-1}=\zeta(g)T, \qquad g\in G,
\end{align}
where  $\zeta$ is a  character of $G$, see \cite{GAdumo76}. 

\subsection{Setting and assumptions}

The Radon transform of a signal
$f\colon X\to\mathbb C $ is defined as the integral of $f$ over a suitable family
$\{\hat{\xi}\}$ of subsets of $X$ indexed by a label
$\xi\in \Xi$. 

In this paper, we assume that the input space $X$ is a lcsc space and the signals are elements of the
Hilbert space $L^2(X,\D x)$, where $\D x$ is a given measure on $X$,
defined on the Borel $\sigma$-algebra of $X$ and finite on compact
subsets.

Following  Helgason's approach, the family
$\{\hat{\xi}\}$ is defined by first choosing a lcsc group $G$ acting on $X$ by a  continuous action
  \begin{equation}\label{eq:action1}
  (g,x) \mapsto g[x]
   \end{equation}
  in such a way that $X$ becomes a  transitive $G$-space. 
Then,  we fix an  origin $x_0\in X$, a closed subgroup $H$ of
$G$ and we define the root $\hat{\xi}_0$ of the family $\{\hat{\xi}\}$ as
  \begin{equation}
\hat{\xi}_0=H[x_0],\label{GAeq:21}
\end{equation}
 which is a closed $H$-invariant subset of $X$, by~\cite[Lemma~1.1]{GAhelgason99}.
 Denote  the set of left cosets by $\Xi=G/H$ and define for each $\xi=gH
 \in\Xi$ the closed subset of $X$
   \begin{equation}
\hat{\xi}= g[\hat{\xi}_0]= gH[x_0],\label{GAeq:31}
\end{equation}
which is independent of the choice of the
representative $g\in G$ of  $\xi\in G/H$.

In order to view the roles played by $X$ and $\Xi$ as somewhat symmetric, we introduce the 
stability subgroup of $G$ at $x_0$ 
\[
K=\{k\in G \colon k[x_0]=x_0\},
  \]
which is a closed subgroup  of $G$ such that $X$ can be identified
with $G/K$ by means of the map
\[ G/K\ni gK\mapsto g[x_0]\in X.\]
Conversely,  we regard  $\Xi$ as a transitive lcsc space with respect to 
the continuous action of $G$ given by
\begin{equation}\label{GAeq:xiaction}
g.\xi = (gg')H \qquad \xi=g'H\in \Xi.
\end{equation}
and we choose, as origin, the point $\xi_0=eH$, which makes \eqref{GAeq:21} and~\eqref{GAeq:31} consistent with each other (see Lemma~\ref{GAlemma1} below). 

With this setting, we need  the following conditions to hold true:\begin{enumerate}[label=A\arabic*),leftmargin=0.7cm]
\item\label{GAass:dx}  the measure $\D x$ is relatively $G$-invariant
  with character $\alpha$ and
   there exists a relatively
  invariant measure $\D\xi$ on $\Xi$   with character $\beta$; 
\item\label{GAass:dm_0} there exists a relatively
  $H$-invariant measure $m_0$ on $\hat{\xi}_0$ with character $\gamma$;
\item\label{GAass:sigma} there exist a Borel section $\sigma\colon\Xi\to
  G$ for the action \eqref{GAeq:xiaction} and a character $\iota$ of $G$ such that  
  \begin{equation}
\gamma\bigl(\sigma(\xi)^{-1}g\sigma(g^{-1}.\,\xi)
\bigr)=\iota(g),\qquad g\in G,\,\xi\in\Xi;\label{GAeq:4}
\end{equation}
\item\label{GAass:irreducible}   the quasi-regular representation $\pi$ of $G$
  acting on $L^2(X,{\rm d}x)$ as
  \[
\pi(g)f(x)=\alpha(g)^{-1/2}f(g^{-1}[x]),
    \]
  is irreducible and square-integrable;
  \item\label{GAass:irreducible1} the quasi-regular representation $\hat{\pi}$
    of $G$ acting on   $L^2(\Xi,{\rm d}\xi)$ as 
\[
\hat{\pi}(g)F(\xi)=\beta(g)^{-1/2}F(g^{-1}.\,\xi),
\]
    is irreducible;
\item\label{GAass:A}  there exists a non-trivial $\pi$-invariant subspace $\mathcal A\subseteq L^2(X,\D x)$ such that for all $f\in\mathcal A$ 
\begin{subequations}\label{GAassumptionclosability}
\begin{align}
& f(\sigma(\xi)[\cdot])\in L^1(\hat{\xi}_0,m_0)  \quad 
  \text{for almost all }\xi\in\Xi, \label{GAeq:closability1} \\
& \mathcal R f:=\int\limits_{\hat{\xi}_0}  f(\sigma(\cdot)[x])  {\rm d}m_0(x)
 \in L^2(\Xi, {\rm d}\xi),  \label{GAeq:closability2}
\end{align}
\end{subequations}
and the adjoint of the operator $\mathcal R\colon\mathcal A\to L^2(\Xi,\D\xi)$ has non-trivial domain.
\end{enumerate}

We add a few comments. The assumption that the measure $\D x$ is
(relatively) invariant ensures that the group $G$ acts also on the
signals by means of the unitary representation $\pi$.

It is worth observing that \eqref{GAeq:closability1} is independent of the section $\sigma$. Indeed, if $\sigma'$ is another section, by assumption \ref{GAass:dm_0} we have
\begin{equation*}
\begin{split}
\int\limits_{\hat{\xi}_0}|f(\sigma'(\xi)[x])|{\rm
  d}m_0(x)
  &=\int\limits_{\hat{\xi}_0}|f(\sigma(\xi)\sigma(\xi)^{-1}\sigma'(\xi)[x])|{\rm
  d}m_0(x)\\
   &=\gamma\left(\sigma'(\xi)^{-1}\sigma(\xi)\right)\int\limits_{\hat{\xi}_0}|f(\sigma(\xi)[x])|{\rm
  d}m_0(x),
    \end{split}
\end{equation*}
since $\sigma(\xi)^{-1}\sigma'(\xi)\in H$.

By means of the section $\sigma$,  the family  $\{\hat{\xi}\}$  is given by
\begin{equation}\label{spreads}
\hat{\xi}=\sigma(\xi)[\hat{\xi}_0]\subseteq X,
\end{equation}
and  the map $x\mapsto
\sigma(\xi)[x]$ is a Borel bijection from $\hat{\xi}_0$ onto
$\hat{\xi}$,  so that~\eqref{GAeq:closability2}  reads as 
\begin{equation} \label{GAeq:radondualpairs}
\mathcal{R}f(\xi)=\int\limits_{\hat{\xi}}\,f(x) {\rm d}m_\xi(x),
\end{equation}
where $m_\xi$ is the image measure of $m_0$ under the above bijection.
Hence   for any signal
belonging to $\mathcal A$, the map $\mathcal R f$ is precisely the Radon transform
of $f$.  Note that  $\mathcal A$ is a dense subspace of $L^2(X,\D x)$ by
irreducibility of $\pi$ and this property also guarantees that
the adjoint of $\mathcal R$ is uniquely defined.

Given the space of signals $L^2(X,\D x)$, there are possibly many
different pairs $(G,H)$ that give rise to the same family
$\{\hat{\xi}\}$ of subsets and (essentially)  to the same  Radon trasform $\mathcal R$.  In this
paper, $G$ is chosen in such a way that $\pi$ is a  square-integrable
representation, so that there exists a self-adjoint operator
  \[C\colon\operatorname{dom}{C}\subseteq L^2(X,{\rm d}x) \to L^2(X,{\rm d}x), \]
semi-invariant with weight $\Delta^{\frac{1}{2}}$, where
  $\Delta$ is the modular function of $G$.  Hence, for all $\psi\in
\operatorname{dom}{C}$ with $\|C\psi\|=1$, the voice transform
$\mathcal V_\psi$ 
\[
  (\mathcal V_\psi f)(g)= \langle f, \pi(g)\psi\rangle, \qquad g\in G,
 \]
 is an isometry from $L^2(X,{\rm d}x)$ into $L^2(G,\mu_G)$. In this
 case the vector $\psi$ is called admissible  
 and  we have the weakly-convergent reproducing formula 
\begin{align}\label{recon}
f=\int\limits_G   (\mathcal V_\psi f)(g)   \pi(g)\psi\ {\rm d}\mu_G(g),
\end{align}
see, for example, \cite[Theorem~2.25]{GAfuhr05}). Eq.~\eqref{recon} is at the basis of
our reconstruction formula~\eqref{GAinversionformula}. 

We stress that in Helgason's approach, the representation $\pi$ is not directly 
considered, and hence there is no need to require it to be either irreducible or square-integrable.  This 
entails  a larger freedom in the choice of the group $G$.

We  recall that, since $X$, $\Xi$ and
$\hat{\xi}_0$ are transitive spaces, there always exist quasi-invariant
measures on these three spaces. In Helgason's approach, it is assumed
that the measures are 
invariant, so that they are unique up to a constant. In this paper, we
only require that $\D x$, $\D\xi$ and $m_0$ are relatively invariant.
In particular, $m_0$ is not uniquely  given (up to a constant) and  the
definition of the Radon transform  depends not only  on the family $\{\hat\xi\}$, but
also on the measure $m_0$ and the section $\sigma$.
Since $m_0$ is not invariant, Assumption \ref{GAass:sigma}  is needed to ensure  the
right covariance properties of the Radon transform and in many
examples it can be easily satisfied by a suitable choice of the section
$\sigma$.

\subsection{The Unitarization Theorem and Inversion Formula}
The isometric extension problem for the Radon transform was actually 
addressed and implicitly solved by Helgason in the general context of symmetric
spaces, see \cite[Corollary 3.11]{GAhelgason94}. However, 
as a consequence of the intertwining properties
of the Radon transform it is possible to provide an alternative proof
of the following result,  see \cite{GAsiam}. 

\begin{theorem}\label{ernestoduflomoore}
Under the above assumptions,  
  \begin{enumerate}
  \item the Radon transform
    $\mathcal{R}\colon\mathcal A\to L^2(\Xi,{\rm d}\xi)$ admits a
    unique closure $\overline{\mathcal{R}}$;
 \item the  closure $\overline{\mathcal{R}}$ satisfies 
    \begin{equation}
      \label{character}
      \overline{\mathcal{R}}\pi(g) =\chi(g)^{-1}\hat{\pi}(g) \overline{\mathcal{R}}, 
    \end{equation}
    for all $g\in G$, where $\chi$ is the character given by
    \begin{equation}\label{chi}
      \chi(g)=\alpha(g)^{1/2}\beta(g)^{-1/2}\gamma(g\sigma(g^{-1}.\xi_0))^{-1} ;
    \end{equation}
\item there exists a unique positive self-adjoint operator
    \[ \mathcal{I}\colon \operatorname{dom}(\mathcal{I}) \supseteq
      \operatorname{Im}\overline{\mathcal{R}}\to L^2(\Xi,{\rm d}\xi),
    \]
    semi-invariant with weight $\zeta=\chi^{-1}$ with the property
    that the composite operator $\mathcal I \overline{\mathcal{R}}$ extends
    to a unitary operator
    $\mathcal Q\colon L^2(X,{\rm d}x)\to L^2(\Xi,{\rm d}\xi)$ intertwining
    $\pi$ and $\hat{\pi}$, namely
    \begin{equation}\label{intertwiningU}
      \hat{\pi}(g)\mathcal Q\pi(g)^{-1}=\mathcal Q,
      \qquad g\in G.
    \end{equation}
  \end{enumerate}
\end{theorem}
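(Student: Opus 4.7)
To prove (1) and (2), the plan is first to establish the covariance
\[
\mathcal{R}\pi(g)f = \chi(g)^{-1}\hat{\pi}(g)\mathcal{R}f, \qquad f\in\mathcal{A},\ g\in G,
\]
on the invariant core $\mathcal{A}$ by a direct change of variables. Unfolding the definitions gives
\[
\mathcal{R}\pi(g)f(\xi) = \alpha(g)^{-1/2}\int\limits_{\hat\xi_0} f\bigl(\sigma(g^{-1}.\xi)\,h(g,\xi)[x]\bigr)\,\D m_0(x),
\]
with $h(g,\xi):=\sigma(g^{-1}.\xi)^{-1}g^{-1}\sigma(\xi)\in H$. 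Assumption~\ref{GAass:dm_0} absorbs $h(g,\xi)$ into the measure as a scalar factor $\gamma(h(g,\xi))^{-1}$, and~\ref{GAass:sigma} identifies this factor with $\gamma(g\sigma(g^{-1}.\xi_0))^{-1}$ (independent of $\xi$). Combining with the $\alpha^{-1/2}$ and $\beta^{-1/2}$ weights coming from $\pi$ and $\hat\pi$ reproduces exactly the character \eqref{chi}.

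For closability, note that $\mathcal{A}$ is dense by irreducibility of $\pi$ (assumption~\ref{GAass:irreducible}), so $\mathcal{R}$ is densely defined. Taking formal adjoints in the covariance shows that $\operatorname{dom}\mathcal{R}^{*}$ is $\hat{\pi}$-invariant; since by~\ref{GAass:A} it is non-trivial, irreducibility of $\hat{\pi}$ (assumption~\ref{GAass:irreducible1}) forces it to be dense. Thus $\mathcal{R}$ is closable, proving (1), and the intertwining relation \eqref{character} extends from $\mathcal{A}$ to $\overline{\mathcal{R}}$ by closedness, proving (2).

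For (3), put $T:=\overline{\mathcal{R}}$. From \eqref{character} one computes
\[
\hat{\pi}(g)\,TT^{*}\,\hat{\pi}(g)^{-1}=\chi(g)^{2}\,TT^{*}, \qquad g\in G,
\]
and, by uniqueness of positive square roots, the analogous identity with weight $\chi$ for $|T^{*}|:=(TT^{*})^{1/2}$. Define $\mathcal{I}:=|T^{*}|^{-1}$ on its natural domain; it is positive, self-adjoint, and semi-invariant with weight $\chi^{-1}$. The polar decomposition $T=|T^{*}|V$ produces a partial isometry $V$ from $(\ker T)^{\perp}$ onto $\overline{\operatorname{Im} T}$. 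The intertwining makes both $\ker T$ and $\overline{\operatorname{Im} T}$ closed invariant subspaces of $L^2(X,\D x)$ and $L^2(\Xi,\D\xi)$ respectively, so irreducibility of $\pi$ and $\hat{\pi}$, together with non-triviality of $T$, forces $\ker T=\{0\}$ and $\overline{\operatorname{Im} T}=L^2(\Xi,\D\xi)$. Hence $V$ is unitary; setting $\mathcal{Q}:=V$ gives the required extension of $\mathcal{I}\overline{\mathcal{R}}$, and \eqref{intertwiningU} reduces to a one-line verification combining the covariance of $T$ with the semi-invariance of $\mathcal{I}$. Uniqueness of $\mathcal{I}$ comes from Schur: any other admissible pair $(\mathcal{I}',\mathcal{Q}')$ makes $\mathcal{Q}^{*}\mathcal{Q}'$ a unitary scalar by irreducibility of $\pi$, and positivity together with the explicit formula $\mathcal{I}'=\mathcal{Q}'T^{*}|T^{*}|^{-2}$ pins down the scalar as~$1$.

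The main obstacle I anticipate is the domain bookkeeping in (3): $T$, $|T^{*}|$ and $\mathcal{I}$ are generically unbounded, so the polar decomposition, the functional calculus extracting the square root, and the semi-invariance must all be applied simultaneously on dense cores that are stable under $\pi$, $\hat{\pi}$ and the spectral projections of $|T^{*}|$. The square-integrability of $\pi$ built into assumption~\ref{GAass:irreducible}, together with the reproducing formula \eqref{recon}, provides the natural supply of such cores (wavelet-type vectors $\pi(g)\psi$), and the actual technical work consists in adapting the classical Duflo--Moore machinery to the two-sided setting $(\pi,\hat{\pi})$ in which the semi-invariance weights on the two sides of the intertwiner interact through $\chi$.
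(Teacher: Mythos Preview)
The paper does not contain a proof of this theorem; immediately before the statement it says ``it is possible to provide an alternative proof of the following result, see~\cite{GAsiam}'' and leaves it at that. So there is no in-paper argument to compare your proposal against.

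That said, your outline is correct and follows what is the expected Duflo--Moore--type route, which is consistent with the paper's remark that the result is proved ``as a consequence of the intertwining properties of the Radon transform''. The covariance computation on $\mathcal{A}$ is right (assumption~\ref{GAass:sigma} is exactly what makes the $\gamma$-factor $\xi$-independent and equal to $\iota(g)=\gamma(g\sigma(g^{-1}.\xi_0))$), and deducing closability from the $\hat\pi$-invariance of $\operatorname{dom}\mathcal R^{*}$ together with~\ref{GAass:irreducible1} is the standard argument. For part~(3) your polar-decomposition scheme works: writing $T=U|T|$ one has $\operatorname{dom}|T^{*}|=U(\operatorname{dom}|T|)$ and $|T^{*}|Uf=Tf$ for $f\in\operatorname{dom} T$, so $\mathcal I\,\overline{\mathcal R}=U$ on $\operatorname{dom}\overline{\mathcal R}$ and $\mathcal Q:=U$ is the unitary extension; irreducibility of $\pi$ and $\hat\pi$ then forces $U$ to be unitary, and the intertwining~\eqref{intertwiningU} follows by combining the semi-invariance of $T$ and of $|T^{*}|^{-1}$ exactly as you say. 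Two small remarks: (i) for uniqueness it is cleaner to invoke directly Duflo--Moore's uniqueness of positive semi-invariant operators \cite[Theorem~1]{GAdumo76} (up to a positive scalar), with the unitarity constraint fixing the scalar to~$1$, rather than the ad hoc formula you wrote; (ii) the square-integrability of $\pi$ in~\ref{GAass:irreducible} is \emph{not} needed anywhere in this theorem---it enters only in Theorem~\ref{GAgeneralinversionformula}---so the domain bookkeeping you anticipate is purely a matter of the polar decomposition of closed operators and does not require wavelet-type cores.
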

It follows that the representations  $\pi$ and $\hat{\pi}$ are
equivalent, so that $\hat{\pi}$ is square-integrable, too.

Since $\mathcal Q$ is unitary and satisfies \eqref{intertwiningU} and   $\pi$
is square-integrable, it is possible to prove the following inversion
formula for the Radon transform, \cite{GAsiam}.
\begin{theorem}\label{GAgeneralinversionformula}
 Let $\psi\in
L^2(X,{\rm d}x)$ be an admissible vector for the representation $\pi$ such that
$\mathcal Q\psi\in\operatorname{dom}\mathcal I$, and set $\Psi= \mathcal I\mathcal{Q}\psi$.
Then, for any $f\in \operatorname{dom}\overline{\mathcal{R}}$,
 \begin{equation}
    \label{GAinversionformula}
  f =  \int\limits_G \chi(g)
  \langle \overline{\mathcal{R}} f,\hat{\pi}(g)\Psi\rangle \,     \pi(g)\psi\ {\rm d}\mu_G(g),
  \end{equation}
  where the integral is weakly convergent,and 
  \begin{equation}\label{reconstructionenergy}
  \|f\|^2=\int\limits_G\chi(g)^2|\langle\overline{\mathcal{R}}f,\hat{\pi}(g)\Psi\rangle|^2{\rm d}\mu(g).
  \end{equation}
  If, in addition,  $\psi\in \operatorname{dom}\overline{\mathcal{R}}$, then 
$
\Psi=\mathcal I^2\overline{\mathcal{R}}\psi. 
$
  \end{theorem}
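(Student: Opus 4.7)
The plan is to reduce the matrix coefficient $\langle f,\pi(g)\psi\rangle$ appearing in the reproducing formula \eqref{recon} to the form $\chi(g)\langle\overline{\mathcal R}f,\hat\pi(g)\Psi\rangle$. Once this pointwise identity is established for every $f\in\operatorname{dom}\overline{\mathcal R}$ and every $g\in G$, both \eqref{GAinversionformula} and \eqref{reconstructionenergy} are immediate: the inversion formula follows by substitution into \eqref{recon}, while the energy identity follows by substitution into the Plancherel-type identity $\|f\|^2=\int_G|\langle f,\pi(g)\psi\rangle|^2\D\mu_G(g)$ that the admissibility of $\psi$ and the square-integrability of $\pi$ guarantee.

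To derive the pointwise identity, I would first invoke the unitarity of $\mathcal Q$ together with the intertwining \eqref{intertwiningU} applied to $\pi(g)\psi$, yielding $\langle f,\pi(g)\psi\rangle=\langle\mathcal Q f,\hat\pi(g)\mathcal Q\psi\rangle$. The inclusion $\operatorname{dom}\mathcal I\supseteq\operatorname{Im}\overline{\mathcal R}$ from Theorem~\ref{ernestoduflomoore} ensures that for every $f\in\operatorname{dom}\overline{\mathcal R}$ one has $\mathcal Q f=\mathcal I\overline{\mathcal R}f$, while the standing assumption $\mathcal Q\psi\in\operatorname{dom}\mathcal I$ gives $\Psi=\mathcal I\mathcal Q\psi$. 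The semi-invariance of $\mathcal I$ with weight $\chi^{-1}$, together with its self-adjointness, forces $\operatorname{dom}\mathcal I$ to be stable under $\hat\pi(g)$ and yields $\mathcal I\hat\pi(g)\varphi=\chi(g)\hat\pi(g)\mathcal I\varphi$ for every $\varphi\in\operatorname{dom}\mathcal I$. Applying this with $\varphi=\mathcal Q\psi$ produces $\mathcal I\hat\pi(g)\mathcal Q\psi=\chi(g)\hat\pi(g)\Psi$, after which the self-adjointness of $\mathcal I$ and the positivity of $\chi$ give
\[
\langle\mathcal Q f,\hat\pi(g)\mathcal Q\psi\rangle=\langle\mathcal I\overline{\mathcal R}f,\hat\pi(g)\mathcal Q\psi\rangle=\langle\overline{\mathcal R}f,\mathcal I\hat\pi(g)\mathcal Q\psi\rangle=\chi(g)\langle\overline{\mathcal R}f,\hat\pi(g)\Psi\rangle.
\]
This is the desired pointwise identity. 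The final claim follows in one line: if in addition $\psi\in\operatorname{dom}\overline{\mathcal R}$, the identity $\mathcal Q\psi=\mathcal I\overline{\mathcal R}\psi$ combines with $\Psi=\mathcal I\mathcal Q\psi$ to give $\Psi=\mathcal I^2\overline{\mathcal R}\psi$.

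The only point that requires some care is the $\hat\pi$-covariance of the unbounded self-adjoint operator $\mathcal I$ at the level of domains, i.e.\ checking that $\hat\pi(g)\operatorname{dom}\mathcal I=\operatorname{dom}\mathcal I$ and that the identity $\mathcal I\hat\pi(g)=\chi(g)\hat\pi(g)\mathcal I$ is valid on this common domain. This is a standard but non-trivial consequence of the definition of semi-invariance applied to unbounded self-adjoint operators of Duflo–Moore type \cite{GAdumo76}; once it is in place, every remaining step reduces to a purely algebraic manipulation inside inner products.
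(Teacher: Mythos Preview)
The paper does not actually prove this theorem; it merely states the result and defers the proof to \cite{GAsiam}. Your argument is correct and is the natural one: you reduce the voice-transform coefficient $\langle f,\pi(g)\psi\rangle$ to $\chi(g)\langle\overline{\mathcal R}f,\hat\pi(g)\Psi\rangle$ using the unitarity and intertwining property of $\mathcal Q$, the identity $\mathcal Q f=\mathcal I\overline{\mathcal R}f$ on $\operatorname{dom}\overline{\mathcal R}$, and the self-adjointness and semi-invariance of $\mathcal I$, then substitute into \eqref{recon} and the isometry of $\mathcal V_\psi$. The only delicate point, which you correctly isolate, is that the semi-invariance identity $\hat\pi(g)\mathcal I\hat\pi(g)^{-1}=\chi(g)^{-1}\mathcal I$ for a closed operator forces $\hat\pi(g)\operatorname{dom}\mathcal I=\operatorname{dom}\mathcal I$ (equality of domains of equal closed operators) and hence $\mathcal I\hat\pi(g)=\chi(g)\hat\pi(g)\mathcal I$ on $\operatorname{dom}\mathcal I$; this is exactly the Duflo--Moore mechanism and your handling of it is sound. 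The final line on $\Psi=\mathcal I^2\overline{\mathcal R}\psi$ is also correct.
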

Note that the datum  $\overline{\mathcal{R}}f$ is
  analyzed by the family   $\{ \hat{\pi}(g)\Psi\}_{g\in G}$ and the
  signal $f$ is  reconstructed by a different family, namely $\{ \pi(g)\psi\}_{g\in G}$.
The idea to exploit the theory of the continuous wavelet transform to derive inversion formulae for the Radon transform is not new, we refer to \cite{GAberenstein-walnut-1994, GAholschneider91,GAmadych-1999,GAolson-destefano-1994,GArubin98,GAwalnut-1993,GAyarman07}--to name a few.

\section{Dual pairs and irreducibility}\label{GAdpi}

In this section, we show the relation between our setting and
the notion of dual pairs introduced by Helgason \cite{GAhelgason99} and the connection
with the assumption on the irreducibility. If we identify $X$
and $\Xi$ with the corresponding homogenous spaces $G/K$ and $G/H$, so
that $\xi=g_1 H$ for some $g_1\in G$, then a point $x=g_2K$ belongs
to $\hat{\xi}$ if and only if $g_2K\cap g_1H\neq\emptyset$, which is the
notion of incidence introduced by Chern  in~\cite{GAchern42} and adopted by Helgason.

Interchanging the roles of $X$ and $\Xi$, we can define 
\[
\check{x}_0=K.\,\xi_0\subseteq\Xi,\qquad \check{x}=s(x).\,\check{x}_0\subseteq \Xi,
\]
where $s\colon X\to G$ is any  section for the action \eqref{eq:action1}. The notion of incidence
makes clear the following duality relation
\[ x\in \hat{\xi} \qquad \Longleftrightarrow \qquad \xi\in \check{x}.\]

Furthermore, if $\check{x}_0$ admits a relatively invariant measure $\hat{m}_0$, we
can define the back-projection of a function $\hat{f}\colon\Xi\to \mathbb C$
as
\[
\mathcal R^\#\hat f(x)= \int\limits_{\check{x}_0} \hat{f}(s(x).\xi) \D \hat{m}_0(\xi) =:
\int\limits_{\check{x}} \hat{f}(\xi) \D \hat{m}_x (\xi),
  \]
provided that the integral converges, where $\hat{m}_x$ is the image measure of $m_0$ under the bijection $\xi\mapsto
s(x).\xi$ from $\check{x}_0$ onto $\check{x}$. 

The pair $(X,\Xi)$ is called a dual pair by Helgason if
both the map $\xi\mapsto \hat{\xi}$ and the map $x\mapsto \check{x}$ are
injective.  Below we provide an alternative characterization of injectivity. 
We need some preliminary facts.
\begin{lemma}\label{GAlemma1}
  For  all $g\in G$ and $\xi\in\Xi$
  \[
    g[\hat{\xi}]=\widehat{g.\xi} 
    \]
\end{lemma}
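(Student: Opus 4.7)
The plan is to unwind the definitions and exploit that the assignment $\xi \mapsto \hat{\xi}$ was already shown to be independent of the choice of coset representative (immediately after~\eqref{GAeq:31}), together with the fact that $(g,x)\mapsto g[x]$ is a left action, so composition of elements in $G$ corresponds to composition of their actions on $X$.

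Concretely, I would start by picking any representative $g'\in G$ with $\xi = g'H$; by the definition~\eqref{GAeq:31}, this gives $\hat{\xi} = g'H[x_0]$. Applying $g$ pointwise and using that $G$ acts on $X$, I get $g[\hat{\xi}] = g[g'H[x_0]] = (gg')H[x_0]$, the rightmost equality being just the group-action identity $g[g'[y]] = (gg')[y]$ applied for each $y \in H[x_0]$. On the other hand, by the definition~\eqref{GAeq:xiaction} of the $G$-action on $\Xi$, we have $g.\xi = g.(g'H) = (gg')H$, and hence, invoking~\eqref{GAeq:31} once more with the representative $gg'$, $\widehat{g.\xi} = (gg')H[x_0]$. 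Comparing the two expressions yields $g[\hat{\xi}] = \widehat{g.\xi}$, as desired.

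There is no real obstacle here: the statement is essentially a consistency check that the definitions of the $G$-actions on $X$ and on $\Xi$, and the label-to-manifold map, fit together correctly. The only point that merits a brief explicit mention is the well-definedness of $\hat{\xi}$ with respect to the choice of representative, which is used twice (once for $\xi$, once for $g.\xi$) and was already recorded in the paper.
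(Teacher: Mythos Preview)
Your proof is correct and follows essentially the same route as the paper's: both pick a representative $g'$ with $\xi=g'H$, invoke~\eqref{GAeq:31} and~\eqref{GAeq:xiaction}, and compare $g[\hat\xi]=(gg')[\hat\xi_0]$ with $\widehat{g.\xi}=(gg')[\hat\xi_0]$. The paper's version is simply more terse, writing the chain $g[\hat\xi]=gg'[\hat\xi_0]=\widehat{g.\xi}$ in one line without the explicit remarks on well-definedness or the group-action identity.
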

\begin{proof}
 For $g\in G$ and $\xi=g'H\in\Xi$, by equations \eqref{GAeq:31} and \eqref{GAeq:xiaction} it holds that 
\[
g[\hat{\xi}]=gg'[\hat{\xi}_0]=\widehat{g.\xi}.
\]
This concludes the proof. 
\end{proof}

  \begin{lemma}\label{GAtildeH}    
  The set 
  \[
    \widetilde{H}=\{g\in G\mid g[\hat{\xi}_0]=\hat{\xi_0}  \}  \]
  is a closed subgroup of $G$ and $\widetilde{H}\supseteq H$.
\end{lemma}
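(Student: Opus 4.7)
The plan is to verify the three assertions in order: subgroup structure, containment of $H$, and topological closedness.

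For the subgroup structure, I would check the three axioms directly from the definition. The identity $e\in\widetilde{H}$ is immediate since $e[\hat\xi_0]=\hat\xi_0$. For closure under products, if $g_1,g_2\in\widetilde{H}$, then $(g_1g_2)[\hat\xi_0]=g_1[g_2[\hat\xi_0]]=g_1[\hat\xi_0]=\hat\xi_0$, using that the map $x\mapsto g[x]$ is a bijection on $X$ and respects composition. For closure under inversion, apply $g^{-1}$ to both sides of $g[\hat\xi_0]=\hat\xi_0$ to obtain $\hat\xi_0=g^{-1}[\hat\xi_0]$.

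The inclusion $H\subseteq\widetilde{H}$ follows from the very definition \eqref{GAeq:21} of $\hat\xi_0$: for any $h\in H$, $h[\hat\xi_0]=h[H[x_0]]=(hH)[x_0]=H[x_0]=\hat\xi_0$, where I use that $hH=H$ because $H$ is a group.

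For closedness, the key inputs are the continuity of the action \eqref{eq:action1} and the fact that $\hat\xi_0=H[x_0]$ is closed in $X$ (as stated in the paper, by \cite[Lemma~1.1]{GAhelgason99}). Suppose $g_n\in\widetilde{H}$ and $g_n\to g$ in $G$. For any $x\in\hat\xi_0$, continuity of the action gives $g_n[x]\to g[x]$; since $g_n[x]\in g_n[\hat\xi_0]=\hat\xi_0$ and $\hat\xi_0$ is closed, the limit $g[x]$ lies in $\hat\xi_0$. Hence $g[\hat\xi_0]\subseteq\hat\xi_0$. Applying the same argument to the sequence $g_n^{-1}\to g^{-1}$, which also lies in $\widetilde{H}$ by the subgroup property already established, yields $g^{-1}[\hat\xi_0]\subseteq\hat\xi_0$, equivalently $\hat\xi_0\subseteq g[\hat\xi_0]$. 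Combining these two inclusions gives $g[\hat\xi_0]=\hat\xi_0$, so $g\in\widetilde{H}$.

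There is no real obstacle here; the only point to be careful about is invoking the closedness of $\hat\xi_0$ at the right moment, since without it the sequential argument for closedness of $\widetilde{H}$ collapses. Everything else is a bookkeeping verification that uses only that $G$ acts continuously and by bijections on $X$.
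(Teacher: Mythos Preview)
Your proof is correct and follows essentially the same approach as the paper: the paper declares the subgroup property and the inclusion $H\subseteq\widetilde{H}$ to be clear, and then proves closedness via exactly the sequential argument you give (using continuity of the action, closedness of $\hat\xi_0$, and the symmetric application to $g_n^{-1}$). You have simply spelled out the ``clear'' parts in more detail.
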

\begin{proof}
Clearly, $\widetilde{H}$ is a subgroup of $G$ and $\widetilde{H}\supseteq H$. We prove that it is
closed. Let $(g_n)_n$ be a sequence of $\widetilde{H}$ converging to $g$ and
$x\in\hat{\xi}_0$, then $(g_n[x])_n$ is a sequence of $\hat{\xi_0} $
converging to $g[x]\in \hat{\xi}_0$ since  the action is
continuous and $\hat{\xi}_0$ is closed. Then,
$g[\hat{\xi}_0]\subseteq\hat{\xi}_0$. The same argument
  applied to the sequence 
$(g_n^{-1})_n$ in $\widetilde{H}$, which converges to $g^{-1}$,
yields $g^{-1}[\hat{\xi}_0]\subseteq\hat{\xi}_0$, namely, $\hat{\xi}_0\subseteq g[\hat{\xi}_0]$.
\end{proof}
The next proposition provides an alternative characterization of the
injectivity in terms of $\widetilde{H}$. More precisely, the map $\xi\mapsto\hat\xi$ is injective if and only if $H$ is chosen as the maximal subgroups fixing $\hat\xi_0$. The reader is referred to Section~\ref{GAexamples} below for two examples of this aspect.

\begin{proposition}\label{GAnovantesimo}
The map $\xi\mapsto \hat{\xi}$ is injective if and only if $\widetilde{H}=H$.  
\end{proposition}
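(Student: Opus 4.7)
The plan is to unwind the definitions on both sides using Lemma~\ref{GAtildeH} (which gives $H\subseteq\widetilde H$) and the formula $\hat\xi = g[\hat\xi_0]$ when $\xi=gH$. Both implications then reduce to translating the statement ``$g_1[\hat\xi_0]=g_2[\hat\xi_0]$'' into ``$g_2^{-1}g_1\in\widetilde H$''.

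For the direction $\widetilde H=H \Rightarrow$ injectivity, I would take two labels $\xi_1=g_1H$ and $\xi_2=g_2H$ with $\hat\xi_1=\hat\xi_2$. By \eqref{GAeq:31} this means $g_1[\hat\xi_0]=g_2[\hat\xi_0]$, so acting by $g_2^{-1}$ on both sides gives $g_2^{-1}g_1[\hat\xi_0]=\hat\xi_0$, i.e.\ $g_2^{-1}g_1\in\widetilde H$. Using the hypothesis $\widetilde H=H$, we get $g_2^{-1}g_1\in H$, hence $g_1H=g_2H$, i.e.\ $\xi_1=\xi_2$.

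For the converse, I would pick any $g\in\widetilde H$ and consider the point $\xi=gH\in\Xi$. By definition of $\widetilde H$ and \eqref{GAeq:31}, $\hat\xi=g[\hat\xi_0]=\hat\xi_0=\hat\xi_0$ where $\xi_0=eH$. The injectivity hypothesis applied to $\xi$ and $\xi_0$ forces $gH=eH$, i.e.\ $g\in H$. Hence $\widetilde H\subseteq H$, and combined with the inclusion $H\subseteq\widetilde H$ from Lemma~\ref{GAtildeH} we conclude $\widetilde H=H$.

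There is essentially no real obstacle here; the whole proof is definitional once one observes that the subgroup $\widetilde H$ is precisely the \emph{stabilizer} of the root $\hat\xi_0$ under the action of $G$ on subsets of $X$, while $\Xi=G/H$ and $\hat\xi_0$ correspond to the base point. The map $\xi\mapsto\hat\xi$ factors as the natural projection $G/H\to G/\widetilde H$ composed with the orbit map $g\widetilde H\mapsto g[\hat\xi_0]$, and the latter is always a bijection onto the $G$-orbit of $\hat\xi_0$; injectivity of the whole map is therefore equivalent to injectivity of $G/H\to G/\widetilde H$, i.e.\ to $H=\widetilde H$. The only minor care needed is to make sure $\widetilde H$ is a subgroup (already granted by Lemma~\ref{GAtildeH}), so the argument is complete.
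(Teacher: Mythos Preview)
Your proof is correct and follows essentially the same route as the paper: both arguments reduce $\hat\xi_1=\hat\xi_2$ to $g_2^{-1}g_1\in\widetilde H$ and $\xi_1=\xi_2$ to $g_2^{-1}g_1\in H$, the only cosmetic difference being that the paper phrases this via the section $\sigma$ rather than arbitrary coset representatives. Your closing remark about the factorization $G/H\to G/\widetilde H\to G\cdot\hat\xi_0$ is a nice conceptual gloss not made explicit in the paper.
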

\begin{proof}
  Given $\xi,\xi'\in\Xi$, the condition $\hat{\xi}=\hat{\xi'}$ is
  equivalent to
  $\sigma(\xi')^{-1}\sigma(\xi)[\hat{\xi}_0]=\hat{\xi}_0$, {\textit
    i.e.},  $\sigma(\xi')^{-1}\sigma(\xi)\in \widetilde{H}$.

On the other hand, since $\xi=\sigma(\xi).\xi_0$ and
$\xi'=\sigma(\xi').\xi_0$, the condition $\xi=\xi'$ is equivalent to
$\sigma(\xi).\xi_0=\sigma(\xi').\xi_0$, {\textit
  i.e.},  $\sigma(\xi')^{-1}\sigma(\xi)\in H$.

If $H=\widetilde{H}$, it follows that $\hat{\xi}=\hat{\xi'}$ if and
only  if $\xi=\xi'$. If $H\neq\widetilde{H}$, then since $H$ is always
contained in $\widetilde{H}$, there exists $g\in \widetilde{H}\setminus H$. Then, by Lemma~\ref{GAlemma1},
\[
\hat{\xi}_0 =g[\hat{\xi}_0]=\widehat{g.\xi_0}.
  \]
However, $g.\xi_0\neq \xi_0$ because $g\notin H$. 
\end{proof}

Since $\widetilde{H}$ is closed, we can consider  the transitive space
$\widetilde{\Xi}=G/\widetilde{H}$ and, since $H$ is a closed subgroup of
$\widetilde{H}$,  the map
\[
j\colon\Xi\to\widetilde\Xi,\qquad j(gH)=g\widetilde{H},
\]
is a continuous surjection intertwining the actions of
$G$ on $\Xi$ and $\widetilde{\Xi}$,  {\em i.e.}
\[
  j(g.\xi)=g.j(\xi),\qquad g\in G,\xi\in \Xi, 
  \]
where the action of $\widetilde{\Xi}$ is still denoted by $g.\widetilde{\xi}$.
Furthermore,  
for all $\widetilde{\xi}=g\widetilde{H}\in\widetilde{\Xi}$, we define
  \[
 \hat{\widetilde{\xi}}=g\widetilde{H}[x_0].
\]
\begin{corollary}\label{GAcor:forzajuve}
For all $\xi\in \Xi$
\begin{equation}
  \label{GAGAeq:1}
  \widehat{j(\xi)} =\hat{\xi}\qquad  
\end{equation}
and the map $\widetilde{\xi}\mapsto \hat{\widetilde{\xi}}$ is injective.
\end{corollary}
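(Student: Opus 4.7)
The plan is to first establish the identification $\widetilde{H}[x_0] = \hat{\xi}_0$, from which both assertions follow as near-tautologies. Since $H \subseteq \widetilde{H}$ by Lemma~\ref{GAtildeH}, we get $\hat{\xi}_0 = H[x_0] \subseteq \widetilde{H}[x_0]$; conversely, every $g \in \widetilde{H}$ satisfies $g[\hat{\xi}_0] = \hat{\xi}_0$, and since $x_0 \in \hat{\xi}_0$ this forces $g[x_0] \in \hat{\xi}_0$, giving $\widetilde{H}[x_0] \subseteq \hat{\xi}_0$.

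For \eqref{GAGAeq:1}, I would write $\xi = gH$, so that $j(\xi) = g\widetilde{H}$, and compute
\[
\widehat{j(\xi)} = g\widetilde{H}[x_0] = g\bigl[\widetilde{H}[x_0]\bigr] = g[\hat{\xi}_0] = \hat{\xi},
\]
the last equality being \eqref{GAeq:31}.

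For the injectivity of $\widetilde{\xi} \mapsto \hat{\widetilde{\xi}}$, I would apply Proposition~\ref{GAnovantesimo} to the triple $(G, \widetilde{H}, x_0)$ in place of $(G, H, x_0)$. Its root submanifold is $\widetilde{H}[x_0] = \hat{\xi}_0$ by the identity above, and the stabilizer of $\hat{\xi}_0$ in $G$ is, by definition of $\widetilde{H}$, the group $\widetilde{H}$ itself. Hence the analogue of $\widetilde{H}$ for the new triple coincides with $\widetilde{H}$, and Proposition~\ref{GAnovantesimo} yields injectivity. A direct alternative is equally short: if $\widetilde{\xi} = g_1\widetilde{H}$ and $\widetilde{\xi}' = g_2\widetilde{H}$ satisfy $\hat{\widetilde{\xi}} = \hat{\widetilde{\xi}'}$, then $g_1[\hat{\xi}_0] = g_2[\hat{\xi}_0]$, so $g_2^{-1}g_1 \in \widetilde{H}$, i.e.\ $\widetilde{\xi} = \widetilde{\xi}'$.

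There is no genuine obstacle here; the whole content is compressed into the identification $\widetilde{H}[x_0] = \hat{\xi}_0$, after which the corollary is essentially a restatement of Proposition~\ref{GAnovantesimo} applied to the maximal stabilizer.
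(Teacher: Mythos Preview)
Your proof is correct and follows essentially the same approach as the paper: both arguments reduce to the identity $\widetilde{H}[x_0]=\hat{\xi}_0$ (the paper obtains it via the chain $\widetilde{H}[x_0]=\widetilde{H}H[x_0]=\widetilde{H}[\hat{\xi}_0]=\hat{\xi}_0$, you via the two inclusions), and both then invoke Proposition~\ref{GAnovantesimo} with $\widetilde{H}$ in place of $H$ for the injectivity claim. Your direct alternative for injectivity is also fine and amounts to unwinding that proposition's proof.
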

\begin{proof}
Fix $\xi=gH\in\Xi$, then
 \[
\widehat{j(\xi)} 
 =g\widetilde{H}[x_0]=g(\widetilde{H}H)[x_0]=
   g\widetilde{H}[\hat{\xi}_0]= g [\hat{\xi}_0]=\hat{\xi},
   \]
where the second equality holds true since
$\widetilde{H}H=\widetilde{H}$ 
whereas the fourth equality is due to the definition of
$\widetilde{H}$.  If  $\widetilde{\xi_0}=j(\xi_0)=\widetilde{H}$ is
the origin of $\widetilde{\Xi}$, from~\eqref{GAGAeq:1}, it follows that
$\widehat{\widetilde{\xi_0}}=\hat{\xi}_0$, so that  
 $\widetilde{\widetilde{H}}=\widetilde{H}$ and the
  injectivity follows from Prop.~\ref{GAnovantesimo} with $H$ replaced by~$\widetilde{H}$. 
\end{proof}

\subsection{Irreducibility}\label{GAirriducibili}
In this section we show that if $H$ is a proper subgroup
  of  $\widetilde{H}$, then
$\hat{\pi}$ is not irreducible. To prove the claim we need that
$\widetilde{H}$ satisfies the same assumptions made on $H$ and that there is
the appropriate compatibility between the two subgroups. 

As in~\ref{GAass:dx}, we first suppose that $G/\widetilde{H}$ has a $G$-relatively invariant
measure $\D\widetilde{\xi}$ with the same character $\beta$ of
$\D\xi$.  Since  $\beta$ satisfies
    \begin{equation}
 \beta(h) = \frac{\Delta_{\widetilde{H}}(h) }{\Delta_G(h)}\quad
 h\in\widetilde{H} ,\quad 
\beta(h) = \frac{\Delta_H (h)}{\Delta_G(h)}\quad
 h\in H,     \label{GAeq:3}
\end{equation}
then
\begin{equation}
\Delta_H(h)=\Delta_{\widetilde{H}}(h),\qquad  h\in H.\label{GAeq:2}
\end{equation}
Eq.~\eqref{GAeq:2} implies that  there exists an invariant measure
$\omega$ on
$\widetilde{H}/H$,  see   \cite[Corollary~2 Section~2, No.~6
INT~VII.43]{GAbourbaki}.

Note that  $\hat{\xi_0}$ is a
transitive space with respect to the action of  $\widetilde{H}$. As in
Ass.~\ref{GAass:dm_0}, we also  assume that the measure  $m_0$ is relatively
$\widetilde{H}$-invariant with character 
$\widetilde{\gamma}$. Note that  
\[ \widetilde{\gamma}(h)=\gamma(h),\qquad h\in H.\]
Finally, strengthening the analogous of~\ref{GAass:sigma} for $\widetilde\Xi$, we suppose that $\sigma$ is such
that 
\begin{equation}
\widetilde{\gamma}( \sigma(\xi')^{-1}\sigma(\xi))=1,\label{GAeq:5}
\end{equation}
for every $\xi,\xi'\in\Xi$ with $j(\xi)=j(\xi')$.

The main result of this work reads as follows.
\begin{theorem}\label{GAthm:main}
Under the above assumptions, if $\hat\pi$ is irreducible, then $H=\widetilde H$ and the map $\xi\mapsto\hat\xi$ is injective.
\end{theorem}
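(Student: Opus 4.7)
The plan is to prove the contrapositive: assume $H\subsetneq\widetilde H$, and exhibit a closed $\hat\pi$-invariant subspace of $L^2(\Xi,d\xi)$ that is neither trivial nor the whole space. The engine of the construction is the $G$-equivariant continuous surjection $j\colon\Xi\to\widetilde\Xi$ introduced just before Corollary~\ref{GAcor:forzajuve}: since $H\subsetneq\widetilde H$, each fibre of $j$ identifies with the non-trivial homogeneous $\widetilde H$-space $\widetilde H/H$ equipped with the invariant measure $\omega$ furnished by \eqref{GAeq:2}. Applying Weil's integration formula to the tower $H\subseteq\widetilde H\subseteq G$---the coincidence of characters of $d\xi$ and $d\widetilde\xi$, both equal to $\beta$, is exactly what aligns the hypotheses of Weil's theorem---one obtains a disintegration
\[
\int_\Xi F(\xi)\,d\xi\,=\,\kappa\int_{\widetilde\Xi}\!\!\int_{\widetilde H/H}\!F\bigl(\tau(\widetilde\xi)hH\bigr)\,d\omega(hH)\,d\widetilde\xi(\widetilde\xi),
\]
valid for non-negative Borel $F$, where $\kappa>0$ and $\tau\colon\widetilde\Xi\to G$ is a Borel section of the canonical projection.

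Using this disintegration, the pullback $\mathcal{J}F:=F\circ j$ satisfies $\|\mathcal{J}F\|^2=\kappa\,\omega(\widetilde H/H)\,\|F\|^2$, so under the (essentially necessary) hypothesis $\omega(\widetilde H/H)<+\infty$ it extends to a bounded injective linear map $\mathcal{J}\colon L^2(\widetilde\Xi,d\widetilde\xi)\to L^2(\Xi,d\xi)$ which is, up to a positive scalar, an isometric embedding. The $G$-equivariance of $j$, combined with the coincidence of characters $\widetilde\beta=\beta$, shows at once that $\mathcal{J}$ intertwines the quasi-regular representation of $G$ on $L^2(\widetilde\Xi,d\widetilde\xi)$ with $\hat\pi$. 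Hence its image $V:=\mathcal{J}(L^2(\widetilde\Xi,d\widetilde\xi))$ is a closed $\hat\pi$-invariant subspace of $L^2(\Xi,d\xi)$: it is non-zero by injectivity of $\mathcal{J}$, and it is proper because $j$ has fibres with more than one point---explicitly, if $\xi_\ast$ lies in a multi-point fibre, a sufficiently small Borel neighbourhood $U$ of $\xi_\ast$ of finite positive $d\xi$-measure that misses some other point of the same fibre gives $\chi_U\in L^2(\Xi)\setminus V$. This contradicts irreducibility of $\hat\pi$; once $H=\widetilde H$ is forced, Proposition~\ref{GAnovantesimo} delivers the injectivity of $\xi\mapsto\hat\xi$.

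The principal obstacle is the case $\omega(\widetilde H/H)=+\infty$, in which $\mathcal{J}$ is no longer defined on $L^2$. The remedy is to pass to the tensor decomposition $L^2(\Xi)\cong L^2(\widetilde\Xi)\otimes L^2(\widetilde H/H,\omega)$ afforded by the disintegration together with a measurable trivialisation of the bundle $j$; a direct computation organised around the $\widetilde H$-valued cocycle $c(g,\widetilde\xi):=\tau(g.\widetilde\xi)^{-1}g\,\tau(\widetilde\xi)$ reveals that $\hat\pi$ acts on the fibre factor by left translation through this cocycle, so any proper closed $\widetilde H$-invariant subspace $W\subseteq L^2(\widetilde H/H,\omega)$ yields the proper closed $\hat\pi$-invariant subspace $L^2(\widetilde\Xi)\otimes W$ of $L^2(\Xi)$. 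The existence of such a $W$---reducibility of the $\widetilde H$-quasi-regular representation on $L^2(\widetilde H/H,\omega)$ whenever $H$ is a proper closed subgroup of $\widetilde H$---is the technically delicate point of this extension.
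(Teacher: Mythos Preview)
Your contrapositive strategy and the invariant subspace of ``fibre-constant'' functions along $j$ coincide with the paper's Lemma~\ref{GAlem5} and Proposition~\ref{GAirrhat}. The genuine gap is the case $\omega(\widetilde H/H)=+\infty$: you reduce it to reducibility of the quasi-regular representation of $\widetilde H$ on $L^2(\widetilde H/H,\omega)$, but you neither prove this nor indicate why it should hold in general, and the tensor decomposition you invoke (via a measurable trivialisation and the cocycle $c$) also needs justification before it can carry the argument.

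The paper sidesteps the case split entirely, and the device is worth internalising. The subspace $L^2(\Xi,d\xi)_0$ of $L^2$ functions factoring through $j$ (your $V$, but defined with no finiteness assumption) is shown to be \emph{non-trivial} by exhibiting an explicit element: for any non-zero $f\in\mathcal A$, the Radon transform $\mathcal R f$ lies in $L^2(\Xi,d\xi)_0$ by Lemma~\ref{GAlem:radon-j}, and it is non-zero because $\mathcal Q$ is an isometry. Irreducibility then forces $L^2(\Xi,d\xi)_0=L^2(\Xi,d\xi)$, and applying the disintegration to a positive $L^1\cap L^2$ function shows that this equality \emph{forces} $\omega(\widetilde H/H)<+\infty$. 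Thus the infinite-$\omega$ scenario is excluded by the irreducibility hypothesis itself, and only the finite case needs a contradiction. By not using the Radon transform at all, your argument loses access to this shortcut. A secondary point: even in the finite case, your properness argument (``a small neighbourhood $U$ of $\xi_\ast$ missing another point of the same fibre gives $\chi_U\notin V$'') is not rigorous as stated, since membership in $V$ is an a.e.\ condition and a single fibre is null; the paper builds instead a set $Z$ from disjoint fibrewise neighbourhoods over a compact $K\subset\widetilde\Xi$ of positive measure and uses the support of each $\nu_{\widetilde\xi}$ to force a contradiction on a set of positive $d\widetilde\xi$-measure.
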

The rest of this section is devoted to the proof of this result.

To prove our main result we recall the following
disintegration formula. We adopt the notation of  \cite[Definition~1,
Section~2, No.~2 INTVII.31]{GAbourbaki}. Given a character
$\beta$ of $G$ and a closed subgroup $G_0$ of $G$, we denote by
$\beta\cdot \mu_G/\mu_{G_0}$ the unique measure on the quotient space
$G/G_0$ such that for all compactly supported continuous functions $f\colon G\to\mathbb C$
  \[
\int\limits_G \beta(g) f(g) \D\mu_G(g)= \int\limits_{G/G_0} \left(\int\limits_{G_0}
f(gh) \D\mu_{G_0}(h) \right) \D \left(\beta\cdot
\mu_G/\mu_{G_0}\right)(gG_0) .
    \]
Observe first that, according to 
\cite[Theorem~3, Section~2, No.~6 INT~VII.43]{GAbourbaki}, the relatively invariant measures
$\D\widetilde{\xi}$ and $\D\xi$ are proportional to
$(\beta\cdot \mu_G)/\mu_{\widetilde{H}}$ and
$(\beta\cdot \mu_G)/\mu_{H}$, respectively.
Furthermore, note that the map
\[
(G,\widetilde{H}) \ni (g,h) \to gh \in G
\]
defines a continuous and proper right action of $\widetilde{H}$ onto $G$. The
measure $\beta\cdot \mu_G$ is right relatively $\widetilde{H}$-invariant with
character $\Delta_{\widetilde{H}}$.  Then by  \cite[Corollary~1, Section~2, No.~8 INT~VII.45]{GAbourbaki}, there exists a positive constant $C>0$ such that, for any
$f\in L^1(\Xi,d\xi)$ 
\begin{align}
  \int\limits_{\Xi} f(\xi) \D\xi  & = C\int\limits_{\widetilde{\Xi}} \left(\,\,
    \int\limits_{\widetilde{H}/H} 
    f(\widetilde{\sigma}(\widetilde{\xi})h.\xi_0) \D\omega(hH) \right) 
  \D\widetilde{\xi} \label{GAeq:6a}
\end{align}
where $\widetilde{\sigma}\colon\widetilde{\Xi}\to G$
is a  section and  the value
$f(\widetilde{\sigma}(\widetilde{\xi})h.\xi_0)$  depends only on the
left coset $hH$ since $H$ is the stability subgroup at 
$\xi_0$. The right hand side is well defined since
there is a negligible set $\widetilde{E}\subset\widetilde{\Xi}$ such 
that if $\widetilde{\xi}\not\in \widetilde{E}$,  the map
\[\widetilde{H}/H\ni hH\to
  f(\widetilde{\sigma}(\widetilde{\xi})h.\xi_0)\in \mathbb C\]
is integrable with respect to $\omega$, 
and  the almost everywhere defined function
  \[ \widetilde{\Xi}\ni\widetilde{\xi}\mapsto
    \int\limits_{\widetilde{H}/{H}}f(\widetilde{\sigma}(\widetilde{\xi})h.\xi_0) \in\mathbb C\]
is integrable with respect to $\D\widetilde{\xi}$.
Furthermore, ~\eqref{GAeq:6a} is equivalent to
  \begin{align}
  \int\limits_{\Xi} f(\xi) \D\xi   
  & = C \int\limits_{\widetilde{\Xi}} \left(\,\,
    \int\limits_{j(\xi)=\widetilde{\xi}} 
    f(\xi) d\nu_{\widetilde{\xi}}(\xi) \right) 
  \D\widetilde{\xi} \label{GAeq:6b}
  \end{align}
where $\nu_{\widetilde{\xi}}$
is the image measure of  $\omega$ under the map
\[ \widetilde{H}/H\ni hH\mapsto
\widetilde{\sigma}(\widetilde{\xi})h.\xi_0\in \Xi,\]
which is a homeomorphism from $\widetilde{H}/H$ onto the closed subset
$j^{-1}(\widetilde{\xi})$. In particular, it holds true that the support of each
$\nu_{\widetilde{\xi}}$ is $j^{-1}(\widetilde{\xi})$.  As a
consequence,  a subset $\widetilde{E}\subset\widetilde{\Xi}$ is
negligible with respect to $\D\widetilde{\xi}$ if and only if
$j^{-1}(\widetilde{E})$ is  negligible with respect to $\D\xi$.

The next  lemma shows that the Radon transform $\mathcal R f (\xi)$  depends only
on $j(\xi)$. 

\begin{lemma}\label{GAlem:radon-j} 
 For all $f\in \mathcal A$, there exists  $\widetilde{F}\colon\widetilde{\Xi}\to \mathbb C$
 and a negligible  set $\widetilde E\subseteq \widetilde \Xi$ such that $j^{-1}(\widetilde E)$ is negligible and 
\begin{equation}
  \mathcal R f(\xi)= \widetilde{F}(j(\xi)), \qquad  \xi\notin j^{-1}(\widetilde E).\label{GAeq:1}
\end{equation}
Furthermore, for every $\xi_1,\xi_2\in\Xi$
\[
\hat\xi_1=\hat\xi_2\quad\implies\quad \mathcal R f(\xi_1) = \mathcal R f(\xi_2).
\]
\end{lemma}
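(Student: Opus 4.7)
My plan is to establish the second assertion---fiberwise constancy of $\mathcal{R}f$ along $j$---and then extract $\widetilde{F}$ by composing with a Borel section of $j$.

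For the fiberwise constancy, suppose $\hat{\xi}_1=\hat{\xi}_2$. By the argument in the proof of Proposition~\ref{GAnovantesimo}, this is equivalent to $h:=\sigma(\xi_2)^{-1}\sigma(\xi_1)\in\widetilde{H}$, and by Corollary~\ref{GAcor:forzajuve} together with the identity $\widehat{j(\xi)}=\hat{\xi}$, it is also equivalent to $j(\xi_1)=j(\xi_2)$. Writing $\sigma(\xi_1)=\sigma(\xi_2)h$ in the defining integral for $\mathcal{R}f(\xi_1)$ and performing the change of variable $y=h[x]$, the identity $h[\hat{\xi}_0]=\hat{\xi}_0$ (by definition of $\widetilde{H}$) together with relative $\widetilde{H}$-invariance of $m_0$ with character $\widetilde{\gamma}$ yields
\[
\mathcal{R}f(\xi_1)=\widetilde{\gamma}(h)^{-1}\,\mathcal{R}f(\xi_2).
\]
Assumption~\eqref{GAeq:5} gives $\widetilde{\gamma}(h)=1$, which proves the second assertion. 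The same calculation applied to $|f|$ shows that \eqref{GAeq:closability1} holds at $\xi_1$ if and only if it holds at $\xi_2$; hence the null set $E_0:=\{\xi\in\Xi\mid\mathcal{R}f(\xi)\text{ is not defined}\}$ is saturated by $j$, that is, $E_0=j^{-1}(j(E_0))$.

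For the existence of $\widetilde{F}$, I would fix a Borel section $\widetilde{\sigma}\colon\widetilde{\Xi}\to G$ (which exists by \cite[Theorem~5.11]{GAvaradarajan85}) and set $\phi(\widetilde{\xi}):=\widetilde{\sigma}(\widetilde{\xi}).\xi_0$, so that $j\circ\phi=\mathrm{id}_{\widetilde{\Xi}}$. Define $\widetilde{F}(\widetilde{\xi}):=\mathcal{R}f(\phi(\widetilde{\xi}))$ whenever this is defined, and extend by $0$ elsewhere. For $\xi\notin E_0$, saturation of $E_0$ puts the entire fiber $j^{-1}(j(\xi))$ outside $E_0$; in particular $\phi(j(\xi))\notin E_0$, so $\widetilde{F}(j(\xi))$ is defined, and applying the fiberwise constancy to $\phi(j(\xi))$ and $\xi$ yields $\widetilde{F}(j(\xi))=\mathcal{R}f(\xi)$.

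Finally, set $\widetilde{E}:=j(E_0)$, so that $j^{-1}(\widetilde{E})=E_0$ by saturation. The equivalence between negligibility of $\widetilde{E}$ and of $j^{-1}(\widetilde{E})$ noted after~\eqref{GAeq:6b} then shows that $\widetilde{E}$ is negligible, modulo the routine step of choosing a Borel representative in the completion of $\D\widetilde{\xi}$. The crux of the argument is the identity $\widetilde{\gamma}(h)=1$, where hypothesis~\eqref{GAeq:5} is used in an essential way; the rest is bookkeeping around the disintegration~\eqref{GAeq:6b} and the standard existence of Borel sections.
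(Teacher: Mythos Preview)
Your proof is correct and follows essentially the same route as the paper's: the key step in both is the change-of-variable computation showing that $\mathcal{R}f(\xi_1)=\widetilde{\gamma}\bigl(\sigma(\xi_2)^{-1}\sigma(\xi_1)\bigr)^{-1}\mathcal{R}f(\xi_2)$ whenever $j(\xi_1)=j(\xi_2)$, after which assumption~\eqref{GAeq:5} forces the factor to be $1$, and the same calculation with $|f|$ yields saturation of the exceptional set. The only cosmetic difference is that you make the construction of $\widetilde{F}$ explicit via a Borel section $\phi$ of $j$, whereas the paper simply observes that fiberwise constancy on a saturated complement defines $\widetilde{F}$ implicitly; both arguments then invoke the remark after~\eqref{GAeq:6b} to pass negligibility from $E_0$ to $\widetilde{E}=j(E_0)$.
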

\begin{proof}
Given $f\in\mathcal A$,  define
  \[
    E = \{ \xi \in \Xi \colon  f(\sigma(\xi))[\cdot])\notin L^1(\hat{\xi}_0,m_0)\}.
\]
By~\eqref{GAeq:closability1}, the set $E$ is negligible. For any two points
$\xi,\xi'\in \Xi$ such that 
  $j(\xi)=j(\xi')$, taking into account that  $\sigma(\xi')^{-1}
  \sigma(\xi)\in \widetilde H$  and by assumption \eqref{GAeq:5},  it holds that
  \begin{align*}
    \mathcal R f(\xi') & = \int\limits_{\hat{\xi_0}} f(\sigma(\xi) \sigma(\xi)^{-1}
                  \sigma(\xi')[x])\,\D m_0(x) \\ & = \widetilde{\gamma}( \sigma(\xi')^{-1}
                                                 \sigma(\xi))
                                                   \int\limits_{\hat{\xi_0}}
                                                   f(\sigma(\xi))[x])\,\D
                                                   m_0(x)
    \\  & = \widetilde{\gamma}( \sigma(\xi')^{-1} \sigma(\xi)) \mathcal R f(\xi) \\&=\mathcal R f(\xi),
  \end{align*}
  so that either $\xi,\xi'\notin E$ or $\xi,\xi'\in E$ and the
  claim follows with $\widetilde E=j(E)$ because
  $E=j^{-1}(j(E))$. Since $E$ is negligible, so is
    $\widetilde{E}$ as a consequence of~\eqref{GAeq:6b}, as already observed.

  The last part immediately follows from Corollary~\ref{GAcor:forzajuve}.
\end{proof}
As shown by the following corollary, $F$ is the Radon transform of $f$
associated to the pair $(X,\widetilde{\Xi})$, which 
trivially satisfies~\ref{GAass:dx} to~\ref{GAass:irreducible}, whereas the definition of the Radon trasform requires only \eqref{GAeq:closability1}. 

\begin{corollary}
Let
  $q\colon \widetilde{\Xi} \to \Xi$ be a measurable section such that $q(\widetilde{\xi_0})=\xi_0$. Then $\widetilde\sigma=\sigma\circ q\colon\widetilde\Xi\to G$ is a measurable section.   Furthermore, the pair $(X,\widetilde{\Xi})$
  satisfies \ref{GAass:sigma} and  \eqref{GAeq:closability1} for all $f\in\mathcal A$ and, denoting  the corresponding Radon transform by
  $\widetilde{\mathcal R}$, for all $f\in\mathcal A$,  
\[ \mathcal R f(\xi)= \widetilde{\mathcal R}f(j(\xi))\qquad \text{a.e. }\xi\in\Xi,\]
and
\[
 \mathcal R f(q(\widetilde\xi))= \widetilde{\mathcal R}f(\widetilde\xi)\qquad \text{a.e. }\widetilde\xi\in\widetilde\Xi.
\]
\end{corollary}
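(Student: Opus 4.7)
The plan is to verify in order that $\widetilde\sigma$ is a Borel section, that~\ref{GAass:sigma} holds for the pair $(X,\widetilde\Xi)$, that the closability condition~\eqref{GAeq:closability1} is inherited on $\mathcal A$, and finally the two pointwise identities between $\mathcal R f$ and $\widetilde{\mathcal R}f$. Measurability of $\widetilde\sigma=\sigma\circ q$ is automatic from that of $q$ and $\sigma$, and since $j$ intertwines the $G$-actions and $j(\xi_0)=\widetilde\xi_0$, I would compute
\[
\widetilde\sigma(\widetilde\xi).\widetilde\xi_0=\sigma(q(\widetilde\xi)).j(\xi_0)=j\bigl(\sigma(q(\widetilde\xi)).\xi_0\bigr)=j(q(\widetilde\xi))=\widetilde\xi,
\]
while $\widetilde\sigma(\widetilde\xi_0)=\sigma(q(\widetilde\xi_0))=\sigma(\xi_0)=e$ from $q(\widetilde\xi_0)=\xi_0$.

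The main step, and the one I expect to carry the technical weight, is the verification of~\ref{GAass:sigma} for $(X,\widetilde\Xi)$, that is,
\[
\widetilde\gamma\bigl(\widetilde\sigma(\widetilde\xi)^{-1}g\,\widetilde\sigma(g^{-1}.\widetilde\xi)\bigr)=\iota(g),\qquad g\in G,\ \widetilde\xi\in\widetilde\Xi.
\]
Setting $\xi=q(\widetilde\xi)$ and $\xi'=q(g^{-1}.\widetilde\xi)$, I would factor
\[
\widetilde\sigma(\widetilde\xi)^{-1}g\,\widetilde\sigma(g^{-1}.\widetilde\xi)
=\bigl[\sigma(\xi)^{-1}g\,\sigma(g^{-1}.\xi)\bigr]\cdot\bigl[\sigma(g^{-1}.\xi)^{-1}\sigma(\xi')\bigr].
\]
The first bracket belongs to $H$ (the stabilizer of $\xi_0$ in $\Xi$) by the section property of $\sigma$, and~\ref{GAass:sigma} for $(X,\Xi)$ combined with $\widetilde\gamma|_H=\gamma$ gives it $\widetilde\gamma$-value $\iota(g)$. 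For the second bracket, the chain $j(g^{-1}.\xi)=g^{-1}.j(\xi)=g^{-1}.\widetilde\xi=j(\xi')$ shows that $\sigma(g^{-1}.\xi)$ and $\sigma(\xi')$ push $\widetilde\xi_0$ to the same element of $\widetilde\Xi$; the bracket therefore lies in $\widetilde H$, and~\eqref{GAeq:5} forces its $\widetilde\gamma$-value to be $1$. Multiplying via the character property of $\widetilde\gamma$ yields the claim with $\widetilde\iota=\iota$.

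With~\ref{GAass:sigma} secured, the rest is essentially bookkeeping against Lemma~\ref{GAlem:radon-j}. For~\eqref{GAeq:closability1} I would observe that the negligible set $E\subset\Xi$ appearing in the proof of that lemma is a union of $j$-fibers, so $E=j^{-1}(j(E))$, and hence $\widetilde E:=j(E)$ is negligible in $\widetilde\Xi$ by the equivalence recorded after~\eqref{GAeq:6b}. For $\widetilde\xi\notin\widetilde E$ the point $q(\widetilde\xi)$ lies outside $E$, so $f(\widetilde\sigma(\widetilde\xi)[\cdot])=f(\sigma(q(\widetilde\xi))[\cdot])\in L^1(\hat\xi_0,m_0)$. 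The identity $\widetilde{\mathcal R}f(\widetilde\xi)=\mathcal R f(q(\widetilde\xi))$ is then immediate by unwinding definitions, and Lemma~\ref{GAlem:radon-j} applied to $\xi$ and $q(j(\xi))$, which share the same $j$-image, yields $\mathcal R f(\xi)=\mathcal R f(q(j(\xi)))=\widetilde{\mathcal R}f(j(\xi))$ for a.e.\ $\xi$. The factorization in the middle paragraph is the only delicate step; everything else reduces to structural facts already in hand.
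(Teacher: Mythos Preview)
Your proposal is correct and follows essentially the same route as the paper: the section property of $\widetilde\sigma$ is verified via $j\circ q=\operatorname{Id}$, assumption~\ref{GAass:sigma} is obtained by the very same factorization (the paper writes it as $A=B\cdot C$ starting from the known $\iota(g)$, you write it as $B=A\cdot C^{-1}$ starting from the target, with the same $A\in H$ and $C=\sigma(\xi')^{-1}\sigma(\xi'')\in\widetilde H$ handled by~\eqref{GAeq:5}), and the passage to~\eqref{GAeq:closability1} and the two Radon identities is carried out exactly as in the paper through Lemma~\ref{GAlem:radon-j} and the negligible set $\widetilde E=j(E)$.
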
  
\begin{proof}
  Let $p\colon G\to \Xi$ and $\widetilde{p}\colon G\to\widetilde{\Xi}$ be the
  canonical projections, then $\widetilde{p}=j\circ p$.  We readily derive
  \[
    \widetilde{p} \circ \widetilde{\sigma} = j \circ
    (p\circ\sigma)\circ q = j\circ q = \operatorname{Id},\qquad
    \widetilde{\sigma}(j(\xi_0) )= \sigma(\xi_0)=e,
  \]
  so that $\widetilde{\sigma}$ is a measurable section from
  $\widetilde{\Xi}$ to $G$. 
    
   From~\eqref{GAeq:4}, with
  $\xi=q(\widetilde{\xi})$ and $g\in G$ we get
  \[
  \begin{split}
    \iota(g) & =\gamma\bigl(\sigma(q(\widetilde{\xi}))^{-1}g\sigma(g^{-1}. q(\widetilde{\xi}))
               \bigr) \\&
               = 
               \widetilde{\gamma}\bigl(\widetilde{\sigma}(\widetilde{\xi})^{-1}g\left(\widetilde{\sigma}(g^{-1}.\,
               \widetilde{\xi}) \sigma(\xi')^{-1}\right)\sigma(\xi'')\bigr)\\ & =\widetilde{\gamma}\bigl(\widetilde{\sigma}(\widetilde{\xi})^{-1}g\widetilde{\sigma}(g^{-1}.\,\widetilde{\xi}) \big) \widetilde{\gamma}( \sigma(\xi')^{-1}\sigma(\xi''))
 \\&
 =\widetilde{\gamma}\bigl(\widetilde{\sigma}(\widetilde{\xi})^{-1}g\widetilde{\sigma}(g^{-1}.\, \widetilde{\xi}) \big)            
 \end{split}
 \]
  where $\xi'= q(g^{-1}.\widetilde{\xi})$ and
  $\xi''= g^{-1}.q(\widetilde{\xi})$ are such that $j(\xi')=j(\xi'')$ so
  that the last equality is a consequence
  of~\eqref{GAeq:5}. Hence,~Assumption~\ref{GAass:sigma} holds true. 

We now prove \eqref{GAeq:closability1} for $\widetilde\sigma$. Take $f\in\mathcal A$. Let $\widetilde E\subseteq\widetilde\Xi$ be the negligible set given by Lemma~\ref{GAlem:radon-j}.  Note that if $\widetilde\xi\notin \widetilde E$ then $q(\widetilde\xi)\notin j^{-1}(\widetilde E)$. Thus, for $\widetilde\xi\notin \widetilde E$ we have
\[
\int\limits_{\hat{\xi}_0}  |f(\widetilde\sigma(\widetilde\xi))[x])|  {\rm d}m_0(x)=\int\limits_{\hat{\xi}_0}  |f(\sigma(q(\widetilde\xi))[x])|  {\rm d}m_0(x)<+\infty,
\]
and so $ f(\widetilde\sigma(\widetilde\xi)[\cdot])\in L^1(\hat{\xi}_0,m_0)$. Similarly, for $\widetilde\xi\notin \widetilde E$ we have
\[
\widetilde{\mathcal R} f(\widetilde\xi):=\int\limits_{\hat{\widetilde\xi}_0}  f(\widetilde\sigma(\widetilde\xi)[x])  {\rm d}m_0(x)=\int\limits_{\hat{\xi}_0}  f(\sigma(q(\widetilde\xi))[x]) {\rm d}m_0(x)=\mathcal R f(q(\widetilde\xi)).
\]
Finally, for $\widetilde\xi=j(\xi)$ with $\xi\notin j^{-1}(\widetilde E)$ we have $j(q(\widetilde \xi))=\widetilde\xi$, and so Lemma~\ref{GAlem:radon-j}  yields
\[
\widetilde{\mathcal R} f(j(\xi))=\mathcal R f(\xi),
\]
as desired.
\end{proof}

\begin{lemma}\label{GAlem5}
  The space
  \[
L^2(\Xi,\D\xi)_0=\{ F\in L^2(\Xi,\D\xi) \mid
  F(\xi)=\widetilde{F}(j(\xi)) \textrm{ for a.e. }  \xi\in\Xi\  \text{ for some
  }\widetilde{F}\colon\widetilde{\Xi}\to\mathbb C \}
\]
is a closed $\hat{\pi}$-invariant subspace. Hence, if
$\hat{\pi}$ is irreducible, then
\begin{equation}
  \label{GAGAeq:2}
  L^2(\Xi,\D\xi)_0= L^2(\Xi,\D\xi).
\end{equation}
\end{lemma}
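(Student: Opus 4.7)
My plan is to verify closedness and $\hat\pi$-invariance of $L^2(\Xi,\D\xi)_0$ separately, and then to deduce equality with $L^2(\Xi,\D\xi)$ under irreducibility by exhibiting a nonzero element via Lemma~\ref{GAlem:radon-j}.

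For closedness, the natural viewpoint is that $L^2(\Xi,\D\xi)_0$ is precisely the $L^2$-space associated to the pullback sub-$\sigma$-algebra $j^{-1}(\mathcal B(\widetilde\Xi))$, and such subspaces are always closed (for instance as ranges of the conditional-expectation projection). Concretely, I would use the disintegration~\eqref{GAeq:6b}: for any $F$ that is a.e. constant on the fibres of $j$ with fibre value $\widetilde F(\widetilde\xi)$, the disintegration gives
\[
\|F\|^2_{L^2(\Xi,\D\xi)}=C\int_{\widetilde\Xi}|\widetilde F(\widetilde\xi)|^2\,\nu_{\widetilde\xi}(j^{-1}(\widetilde\xi))\,\D\widetilde\xi,
\]
and $\nu_{\widetilde\xi}(j^{-1}(\widetilde\xi))=\omega(\widetilde H/H)$ is a constant $V\in(0,+\infty]$ independent of $\widetilde\xi$. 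When $V<\infty$ this identifies $L^2(\Xi,\D\xi)_0$ isometrically with a rescaling of $L^2(\widetilde\Xi,\D\widetilde\xi)$, which is complete; when $V=\infty$ the subspace is trivial and so closed as well.

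Invariance is a direct computation: the intertwining property $j(g^{-1}.\xi)=g^{-1}.j(\xi)$ implies that if $F(\xi)=\widetilde F(j(\xi))$ a.e., then $(\hat\pi(g)F)(\xi)=\beta(g)^{-1/2}\widetilde F(g^{-1}.j(\xi))=\widetilde G(j(\xi))$ a.e., where $\widetilde G(\widetilde\xi)=\beta(g)^{-1/2}\widetilde F(g^{-1}.\widetilde\xi)$; the a.e. identity is preserved because $\hat\pi(g)$ permutes $\D\xi$-null sets. Finally, if $\hat\pi$ is irreducible, its only closed invariant subspaces are $\{0\}$ and $L^2(\Xi,\D\xi)$, so it suffices to show $L^2(\Xi,\D\xi)_0\neq\{0\}$. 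Lemma~\ref{GAlem:radon-j} gives exactly this: for every $f\in\mathcal A$ the Radon transform $\mathcal R f$ factors through $j$ a.e., so $\mathcal R(\mathcal A)\subseteq L^2(\Xi,\D\xi)_0$; were this image zero, we would have $\overline{\mathcal R}=0$, contradicting the fact (Theorem~\ref{ernestoduflomoore}) that $\mathcal I\overline{\mathcal R}$ extends to the unitary operator $\mathcal Q$ between two nontrivial Hilbert spaces.

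I expect the main obstacle to be a careful bookkeeping of the ``a.e.'' quantifier in the definition of $L^2(\Xi,\D\xi)_0$: one must ensure that a Cauchy sequence of functions factoring through $j$ only \emph{after discarding a null set} has a limit with the same property, which is precisely why the disintegration~\eqref{GAeq:6b} and the constancy of $\nu_{\widetilde\xi}(j^{-1}(\widetilde\xi))$ play an essential role.
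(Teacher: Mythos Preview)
Your proposal is correct. The invariance argument and the nontriviality argument (via Lemma~\ref{GAlem:radon-j} and Theorem~\ref{ernestoduflomoore}) match the paper's proof essentially verbatim.

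For closedness, however, you take a genuinely different route. You invoke the disintegration~\eqref{GAeq:6b} to obtain $\|F\|^2=CV\int_{\widetilde\Xi}|\widetilde F|^2\,\D\widetilde\xi$ with $V=\omega(\widetilde H/H)$, and then split: if $V<\infty$ the map $\widetilde F\mapsto\widetilde F\circ j$ is (up to a scalar) an isometry of $L^2(\widetilde\Xi,\D\widetilde\xi)$ onto $L^2(\Xi,\D\xi)_0$, which is therefore complete; if $V=\infty$ the subspace is $\{0\}$. The paper instead argues directly and elementarily, without any disintegration: given $F_n=\widetilde F_n\circ j\to F$ in $L^2$, it passes to an a.e.\ convergent subsequence off a null set $E$, defines $\widetilde F(\widetilde\xi)=\lim_n\widetilde F_n(\widetilde\xi)$ on $j(\Xi\setminus E)$ and $0$ elsewhere, and observes $F=\widetilde F\circ j$ off $E$. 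The paper's route is shorter and needs no case split on $V$; your route is more structural and has the pleasant side effect of already exhibiting the identity $\|F\|^2=CV\|\widetilde F\|^2_{L^2(\widetilde\Xi)}$, which anticipates the finiteness of $\omega(\widetilde H/H)$ established later in Proposition~\ref{GAirrhat}. One small point to make explicit in your write-up: the definition of $L^2(\Xi,\D\xi)_0$ allows an arbitrary $\widetilde F\colon\widetilde\Xi\to\mathbb C$, so to identify the subspace with $L^2$ over the pullback $\sigma$-algebra (or with $L^2(\widetilde\Xi,\D\widetilde\xi)$) you should note that one can always replace $\widetilde F$ by a Borel-measurable version, e.g.\ $F\circ q$ for a measurable section $q\colon\widetilde\Xi\to\Xi$.
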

\begin{proof}
  We first observe that, given $F\in L^2(\Xi,\D\xi)_0$, by
  construction  there exists $\widetilde{F}\colon\widetilde{\Xi}\to\mathbb C $ such
  that $F$ and $\widetilde{F}\circ j$ are equal almost everywhere. Hence, we can
  always assume that $F=\widetilde{F}\circ  j$. 
  
 Let $(F_n)$ be a sequence in  $L^2(\Xi,\D\xi)_0$ converging to $F\in
 L^2(\Xi,\D\xi)$. As observed, we can assume that $F_n=\widetilde{F}_n\circ  j$
where $\widetilde{F}_n\colon \widetilde{\Xi}\to\mathbb C$. Since $F_n$ converges to
$F$, possibly passing to a subsequence, there exists a negligible set
$E$ such that  
 \[
   \lim_{n\to +\infty} F_n(\xi) = \lim_{n\to
     +\infty}\widetilde{F}_n(j(\xi))= F(\xi),\qquad  \xi\notin E.
 \]
 Define $\widetilde{F}\colon \widetilde{\Xi}\to\mathbb C $ as
 \[
  \widetilde{F}(\widetilde{\xi} ) = \begin{cases}
     \displaystyle{\lim_{n\to +\infty}\widetilde{F}_n(\widetilde{\xi}) }&
     \widetilde{\xi}\in j(\Xi\setminus E), \\
     0 & \widetilde{\xi}\notin  j(\Xi\setminus E). 
   \end{cases}
   \]
Then by construction
\[
F(\xi)= \widetilde{F}(j(\xi)),\qquad \xi\notin E.
  \]
It follows that $L^2(\Xi,\D\xi)_0$ is closed. We now prove that it is
$\hat{\pi}$-invariant.   Given $g\in G$, for all $F\in L^2(\Xi,\D\xi)_0$
\begin{align*}
  \hat{\pi}(g)F(\xi) & = \beta(g)^{-1/2} F(g^{-1}.\xi)\\&= \beta(g)^{-1/2}
                      \widetilde{F}(j(g^{-1}.\xi)) \\ &
                                                       = \beta(g)^{-1/2}
                                                        \widetilde{F}(g^{-1}. j(\xi)), 
\end{align*}
so that $L^2(\Xi,\D\xi)_0$ is $\hat{\pi}$-invariant.

Assume that $\hat{\pi}$ is irreducible, then 
$L^2(\Xi,\D\xi)_0$ is zero or the full space. Since $\mathcal A$ is not
trivial, there exists a non-zero $f\in\mathcal A$ such 
that $\mathcal R f \in L^2(\Xi,\D\xi)_0$ by~\eqref{GAeq:1}. Furthermore, 
$\mathcal R f\neq 0$ since  $\mathcal I\mathcal R f = \mathcal Q f\neq 0$ because $\mathcal Q$ is
an isometry. Hence $L^2(\Xi,\D\xi)_0$ is non-trivial and
$L^2(\Xi,\D\xi)_0=L^2(\Xi,\D\xi)$. 
\end{proof}

The proof of Theorem~\ref{GAthm:main} will be an immediate consequence of Proposition~\ref{GAnovantesimo} and of the following result.
\begin{proposition}\label{GAirrhat}
  Assume that  $\widetilde{H}\neq H$, then $\hat{\pi}$ is not
irreducible. 
\end{proposition}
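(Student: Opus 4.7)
I argue by contradiction. Assume the standing hypotheses of Section~\ref{GAirriducibili} hold, that $\widetilde H\neq H$, and that $\hat\pi$ is nonetheless irreducible; by Lemma~\ref{GAlem5} this forces $L^2(\Xi,\D\xi)_0=L^2(\Xi,\D\xi)$, so every $F\in L^2(\Xi,\D\xi)$ is almost everywhere constant along each fiber of $j$. I will exhibit an explicit $F\in L^2(\Xi,\D\xi)$ not of this form, obtaining the desired contradiction. Since $L^2(\Xi,\D\xi)_0$ is already known from Lemma~\ref{GAlem5} to be closed, $\hat\pi$-invariant, and nonzero, the whole burden is to show it is \emph{proper}.

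The construction exploits the fibered structure made explicit by~\eqref{GAeq:6b}: for every $\widetilde\xi\in\widetilde\Xi$, the fiber $j^{-1}(\widetilde\xi)$ is homeomorphic to $\widetilde H/H$ via $hH\mapsto \widetilde\sigma(\widetilde\xi)h.\,\xi_0$, and this map pushes the invariant measure $\omega$ on $\widetilde H/H$ forward to $\nu_{\widetilde\xi}$. Since $\widetilde H\neq H$, the transitive $\widetilde H$-space $\widetilde H/H$ has more than one point; invariance forces $\omega$ to have full support, and local compactness then lets me choose a Borel set $A\subseteq\widetilde H/H$ with
\[
0<\omega(A)<\infty\qquad\text{and}\qquad \omega((\widetilde H/H)\setminus A)>0.
\]
Picking also $\widetilde B\subseteq\widetilde\Xi$ with $0<\D\widetilde\xi(\widetilde B)<\infty$, I define
\[
F(\xi)=\mathbf 1_{\widetilde B}(j(\xi))\,\mathbf 1_A\bigl(h(\xi)H\bigr),
\]
where $h(\xi)H$ denotes the unique coset in $\widetilde H/H$ with $\xi=\widetilde\sigma(j(\xi))h(\xi).\,\xi_0$. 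Applying~\eqref{GAeq:6a} to $|F|^2$ gives $\|F\|^2 = C\,\D\widetilde\xi(\widetilde B)\,\omega(A)<\infty$, so $F\in L^2(\Xi,\D\xi)$.

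To finish, suppose for contradiction $F=\widetilde F\circ j$ $\D\xi$-almost everywhere for some $\widetilde F\colon\widetilde\Xi\to\mathbb C$. By~\eqref{GAeq:6b}, the $\D\xi$-negligible exceptional set meets $\D\widetilde\xi$-a.e.\ fiber in a $\nu_{\widetilde\xi}$-negligible set, so for a.e.\ $\widetilde\xi\in\widetilde B$ the restriction of $F$ to $j^{-1}(\widetilde\xi)$ must be $\nu_{\widetilde\xi}$-essentially constant; yet, in the parametrization by $\widetilde H/H$ that restriction is precisely $\mathbf 1_A$, which is not $\omega$-essentially constant by the choice of $A$. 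This contradiction yields $L^2(\Xi,\D\xi)_0\subsetneq L^2(\Xi,\D\xi)$ and hence the reducibility of $\hat\pi$. The main technical obstacle I anticipate is justifying Borel measurability of the coordinate map $\xi\mapsto h(\xi)H$ used to define $F$; this should follow from measurability of the sections $\sigma$ and $\widetilde\sigma$ together with a standard Borel cross-section argument for the quotient $\widetilde H\to\widetilde H/H$, but it is the only step that is not immediate from the disintegration already recorded in the preceding discussion.
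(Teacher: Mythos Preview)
Your argument is correct and follows the same overall strategy as the paper---assume irreducibility, invoke Lemma~\ref{GAlem5} to get $L^2(\Xi,\D\xi)_0=L^2(\Xi,\D\xi)$, then exhibit an $L^2$ function that is not essentially constant along the fibers of $j$---but the construction of the witness is organized differently. The paper first deduces, \emph{from} the equality $L^2(\Xi,\D\xi)_0=L^2(\Xi,\D\xi)$, that $\omega(\widetilde H/H)<\infty$; it then picks, for each $\widetilde\xi$, two disjoint compact neighbourhoods $V^1_{\widetilde\xi},V^2_{\widetilde\xi}\subset\Xi$ of two distinct points of the fiber $j^{-1}(\widetilde\xi)$, and takes as witness the characteristic function of $Z=\{\xi: j(\xi)\in K,\ \xi\in V^1_{j(\xi)}\}$ for a compact $K\subset\widetilde\Xi$, using the finiteness of $\omega$ to check $Z$ has finite measure. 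Your route is more economical: by fixing once and for all a Borel set $A\subset\widetilde H/H$ with $0<\omega(A)<\infty$ and $\omega(A^c)>0$, and pulling it back through the fiber coordinate $\xi\mapsto h(\xi)H$, you avoid the preliminary step of proving $\omega$ finite and you sidestep any issue about the measurable dependence of the $V^i_{\widetilde\xi}$ on $\widetilde\xi$. The point you flag---Borel measurability of $\xi\mapsto h(\xi)H$---is indeed routine: it is the composition of the Borel map $\xi\mapsto \widetilde\sigma(j(\xi))^{-1}.\,\xi\in j^{-1}(\widetilde\xi_0)$ (built from the measurable sections $j$ and $\widetilde\sigma$ and the continuous action) with the inverse of the homeomorphism $\widetilde H/H\to j^{-1}(\widetilde\xi_0)$, $hH\mapsto h.\,\xi_0$, which is the $\widetilde\xi=\widetilde\xi_0$ case of the homeomorphism recorded just after~\eqref{GAeq:6b}.
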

\begin{proof}
Suppose by  contradiction that $\hat{\pi}$ is irreducible,  then
$L^2(\Xi,\D\xi)_0= L^2(\Xi,\D\xi)$ by Lemma~\ref{GAlem5}.

We first prove that $\omega$ is a finite measure. 
Fix $f\in L^1(\Xi,\D\xi)\cap L^2(\Xi,{\rm d}\xi)$ such that
$f$ is positive and $f\neq 0$, then there exists
$F\colon \widetilde{\Xi}\to\mathbb C $ such that $F(j(\xi))=f(\xi)$ for all $\xi
\notin E$ where $E\subset\Xi$ is negligible. Hence, by~\eqref{GAeq:6b}
\begin{align*}
0<  \int\limits_{\Xi} f(\xi) \D \xi   
  & =  \int\limits_{\Xi} F(j(\xi)) \D \xi  = 
    C \int\limits_{\widetilde{\Xi}} \left(\,\,
    \int\limits_{j(\xi)=\widetilde{\xi}} 
   F(j(\xi))\D\nu_{\widetilde{\xi}}(\xi) \right) 
    d\widetilde{\xi} \\
  & = C \int\limits_{\widetilde{\Xi}} F(\widetilde{\xi}) \ 
    \nu_{\widetilde{\xi}} (\Xi) \,\D\widetilde{\xi} <\infty.
\end{align*}
It follows that  there exists a negligible subset $\widetilde{E}\subset\widetilde{\Xi}$
such that for all $\widetilde{\xi}\notin\widetilde{E}$, $\nu_{\widetilde{\xi}}(\Xi)$
is finite. By construction of $\nu_{\widetilde{\xi}}$ as image measure
\[
\nu_{\widetilde{\xi}}(\Xi) =\omega(\widetilde{H}/H) <+\infty,\qquad
 \widetilde{\xi}\in\widetilde{\Xi}.
  \] 
  
  \begin{figure}\label{GAfig}
\centering

\begin{tikzpicture}[scale=1.1,domain=-5:5,dot/.style={fill,circle,inner sep=0pt,minimum size=2pt}]  
    
\node at (3.6,7) {$\Xi$};

\draw[-] plot [smooth,tension=1] coordinates{(-4,3.4) (-2,2.8) (0,3.4) (2,3.5) (4,4)};
\draw[-] plot [smooth,tension=1] coordinates{(-4,4) (-2,3.4) (0,4) (2,4.1) (4,4.6)};

\draw[-] plot [smooth,tension=1] coordinates{(-4,5.5) (-2,5.8) (0,6.2) (2,5.8) (4,6.1)};
\draw[-] plot [smooth,tension=1,dash pattern=on -4 off 4] coordinates{(-4,4.9) (-2,5.2) (0,5.6) (2,5.2) (4,5.5)};

\draw (-4,2) -- (4,2);
\node at (3.7,1.8) {$\tilde\Xi$};

\draw (0,1.8) -- (0,7);
\node at (0,1.55) {$\tilde\xi$};
\node[font=\small] at (.5,7) {$j^{-1}(\tilde\xi)$};

\draw[line width=0.3mm] (-3,2) -- (3,2);
\node at (2.7,1.8) {$K$};

\draw[line width=0.25mm] (3,3.7) -- (3,4.3);
\draw[line width=0.25mm] (-3,2.95) -- (-3,3.55);
\node at (2.7,3.9) {$Z$};

\draw [dashed] (3,1.8) -- (3,7);
\draw [dashed] (-3,1.8) -- (-3,7);

\node[dot]  at (0,3.7) {};
\node[font=\scriptsize] at (-0.12,3.65) {$\xi_1$};
\node[font=\scriptsize] at (0.55,3.75) {$V^1_{\tilde\xi}$};
  
\node[dot]  at (0,5.9) {};
\node[font=\scriptsize] at (-0.12,5.85) {$\xi_2$};
\node[font=\scriptsize] at (0.55,5.85) {$V^2_{\tilde\xi}$};

\draw (0,3.7) circle (.29cm);
\draw (0,5.9) circle (.29cm);
\end{tikzpicture}

\caption{The setup considered in the proof of Proposition~\ref{GAirrhat}.}
\end{figure}
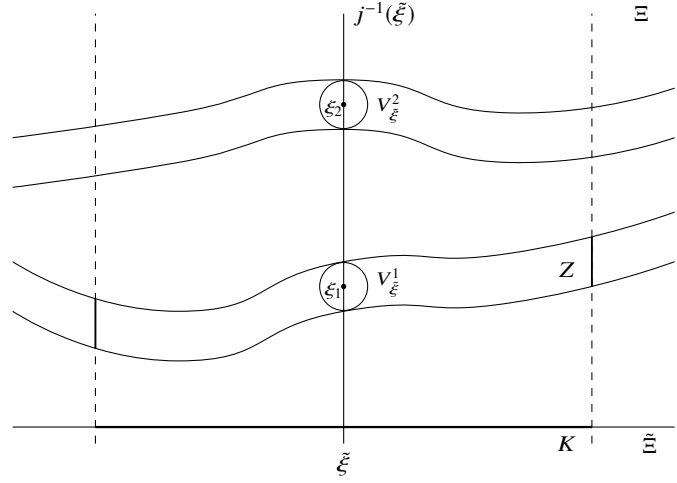
  
For every $\widetilde{\xi}\in\widetilde{\Xi}$,   the map 
$hH\mapsto (\widetilde{\sigma}(\widetilde{\xi})h).\xi_0$ is a
homeomorphism  from $\widetilde{H}/H$ onto  $j^{-1}(\widetilde{\xi})$. Thus, since 
$\widetilde{H}/H$ is not a singleton,  there exist 
$\xi_1,\xi_2\in j^{-1}(\widetilde{\xi})$ such that
$\xi_1\neq \xi_2$ and,  hence, two disjoint compact 
neighbourhoods  $V^1_{\widetilde{\xi}},V^2_{\widetilde{\xi}}$ of $\xi_1$
and $\xi_2$, respectively (see Figure~\ref{GAfig}).  
Since the support of $\nu_{\widetilde{\xi}}$ is $j^{-1}(\widetilde{\xi})$,
then
\begin{equation}\label{GAeq:juve1inter2}
\nu_{\widetilde{\xi}}(V^1_{\widetilde{\xi}})>0,\qquad \nu_{\widetilde{\xi}}(V^2_{\widetilde{\xi}})>0.
\end{equation}

Let now  $K\subset\widetilde{\Xi}$ be a compact
set of positive measure,
\[
Z=\{\xi\in\Xi: j(\xi) \in K,\, \xi\in V^1_{j(\xi)} \} ,
\]
and  $f$ be the corresponding characteristic function of $Z$. By applying~\eqref{GAeq:6b} we obtain
\[
0< \int\limits_{\Xi} |f(\xi)|^2 \D \xi    = \int\limits_{\Xi} f(\xi) \D \xi    = C \int\limits_{K} 
    \nu_{\widetilde{\xi}} ( V^1_{\widetilde{\xi}} ) \D\widetilde{\xi}
    \leq C \omega(\widetilde{H}/H) \int\limits_{K} \D\widetilde{\xi}
    <+\infty,
  \]
which is finite since $\omega(\widetilde{H}/H)<+\infty$ and $K$ is
compact. 
We can apply~\eqref{GAeq:6b} since $f$ is positive  \cite[item c),
Corollary~1, Section~2, No.~8 INT~VII.45]{GAbourbaki}.
Hence $f\in L^2(\Xi,\D\xi)$ and, as above, there exists
$F\colon \widetilde{\Xi}\to\mathbb C $ such that $F(j(\xi))=f(\xi)$ for all $\xi
\notin   E'$ where $E'\subset\Xi$ is negligible. Since $E'$ is negligible, by \eqref{GAeq:6b} applied to  the characteristic function of $E'$
there exists a negligible subset $\widetilde{E'}\subset\widetilde{\Xi}$
such that
\begin{equation}\label{GAeq:juve1toro?}
\nu_{\widetilde{\xi}} \left( E'\cap j^{-1}(\widetilde{\xi})\right) =0,\qquad \widetilde{\xi}\notin \widetilde{E'}.
\end{equation}
Choose an arbitrary $\widetilde\xi\in K\setminus\widetilde
  E'$. By \eqref{GAeq:juve1inter2} and
  \eqref{GAeq:juve1toro?}, for $i=1,2$ there exists $\xi^i\in
  V^i_{\widetilde{\xi}}\setminus E'$ such that
  $j(\xi^i)=\widetilde E'$. Thus 
\[ F(\widetilde{\xi})=F(j(\xi^i))=f(\xi^i)=
  \begin{cases}
    1 & \text{if $i=1$,}\\
    0 & \text{if $i=2$,}
      \end{cases}
\]
which is absurd.
\end{proof}

\section{3D-signals: Radon and ray transforms}\label{GAexamples}
\subsection{The Radon transform on $\mathbb R^3$}\label{GAex:SIM}
\subsubsection{Groups and spaces}
The Radon transform on $\mathbb R^3$ of a signal $f$ is defined as the
integral of $f$ over the set of  planes in $\mathbb R^3$. We
  show that this is an example of our construction.
\par
The input space is $X=\mathbb R^3$ and the
  group is ${\rm SIM}(3)$,  the semi-direct product $\mathbb R^3\rtimes
  K$, with $K=\{aR\in \operatorname{GL}(3,\mathbb R):R\in
  \operatorname{SO}(3),a\in \mathbb R_+\}$. Under the identification 
$K\simeq \operatorname{SO}(3)\times\mathbb R_+$, we write $(\mathbf b,R,a)$
for the elements in ${\rm SIM}(3)$ and the group law becomes  
\[
(\mathbf b,R,a)(\mathbf b',R',a')=(\mathbf b+aR\mathbf b',RR',aa').
\]
A left Haar measure of ${\rm SIM}(3)$ is given by 
\begin{equation}\label{GAHaarmeasure}
{\rm d}\mu(\mathbf b,R,a)=a^{-4}{\rm d}\mathbf b{\rm d}R{\rm d}a,
\end{equation}
where ${\rm d}\mathbf b$ and ${\rm d}a$ are the Lebesgue measures on
$\mathbb R^3$ and $\mathbb R_+$, respectively, and ${\rm d}R$ is a  Haar measure of $\operatorname{SO}(3)$.
The  group ${\rm SIM}(3)$ acts on $\mathbb R^3$ by the canonical action 
\[
(\mathbf b,R,a)[\mathbf x]=\mathbf b+aR\mathbf x,\qquad (\mathbf b,R,a)\in {\rm SIM}(3),\, \mathbf x\in\mathbb R^3.
\]
The isotropy at the origin $\mathbf x_0=0$ is the subgroup $\{(0,k):k\in K\}$ which we identify with $K$, so that $X\simeq {\rm SIM}(3)/K$. Furthermore, the Lebesgue measure ${\rm d}\mathbf x$ on $\mathbb R^3$ is a relatively ${\rm SIM}(3)$-invariant measure with positive character $\alpha(\mathbf 
b,R,a)=a^3$. It remains to choose the closed subgroup $H$ of ${\rm SIM}(3)$
 in such a way that $\{\widehat{\xi}\}$ is the set of planes in $\mathbb R^3$.
We consider $ H = (\mathbb
R^2\times\{0\}) \rtimes  (\operatorname{O}(2) \times \mathbb R_+)$, where
$\operatorname{O}(2)$  
denotes the subgroup of rotations leaving 
  the plane $z=0$   invariant, i.e.\ it consists of the matrices of the form
  \[
    R=
    \begin{bmatrix}
    R_{\pm} R_\theta   & 0\\
  0 & \pm 1 
\end{bmatrix},
\]
where $R_\theta\in {\rm SO}(2)$, $R_+$ is the identity and $R_-=[
\begin{smallmatrix}
 1 & 0 \\ 0 & -1 
\end{smallmatrix}]$. By \eqref{GAeq:21}, the root manifold is
 the $xy$-plane
\[
\hat{\xi_0}=H[\mathbf x_0]=\{\mathbf x\in\mathbb R^3\colon \mathbf x\cdot \mathbf e_3=0\}
\]
and it is easy to verify that $m_0={\rm d}x{\rm d}y$ is a relatively $H$-invariant measure on $\hat{\xi}_0$ with character $\gamma(\mathbf b,R,a)=a^2$.
Furthermore, for each $\xi=(\mathbf b,R,a)H\in\Xi={\rm SIM}(3)/H$, by \eqref{GAeq:31} we compute 
\[
\hat{\xi}=(\mathbf b,R,a)[\hat{\xi}_0]=\{\mathbf x\in\mathbb R^3\colon R\mathbf{e}_3\cdot\mathbf{x}=R\mathbf{e}_3\cdot\mathbf{b}\},
\]
which is the plane perpendicular to the vector $R\mathbf{e}_3$ and passing through $\mathbf b$. It is worth observing that $H$ is the maximal closed subgroup of ${\rm SIM}(3)$ which 
satisfies $h[\hat{\xi}_0]=\hat{\xi_0}$ for every $h\in H$ and then, by Proposition~\ref{GAnovantesimo}, the map $\xi\to\hat\xi$ is injective. Had we chosen $ H' = (\mathbb
R^2\times\{0\}) \rtimes  (\operatorname{SO}(2) \times \mathbb R_+)$, we would have had $H'\subsetneq\widetilde H$ and, indeed, the map $\xi'\mapsto\widehat{\xi'}$ would not have been injective, since every plane would have been labeled by two different $\xi'$.
\par
  We identify the coset space $\Xi={\rm SIM}(3)/H$ with  the  set 
$[0,\pi)_*^2\times\mathbb R$, where
  \[
  [0,\pi)_*^2 = \bigl([0,\pi)\times(0,\pi)\bigr)\cup\{(0,0)\},
  \]
and we write 
$$\mathbf n(\theta,\varphi)={^t(\sin{\varphi}\cos{\theta}, \sin{\varphi}\sin{\theta},\cos{\varphi})}$$
for every $(\theta,\varphi)\in [0,\pi)_*^2$. The group ${\rm SIM}(3)$ acts on $ [0,\pi)_*^2\times\mathbb R$ by the transitive action 
\[
(\mathbf b,R,a).(\theta,\varphi,t)  = (\theta_R,\varphi_R, R\,\mathbf n(\theta,\varphi)\cdot\mathbf n(\theta_R,\varphi_R)(at + R\, \mathbf n(\theta,\varphi) \cdot \mathbf b)),
\]
where $(\theta_R,\varphi_R)\in [0,\pi)_*^2$ is such that $R\,\mathbf n(\theta,\varphi)=\pm \mathbf n(\theta_R,\varphi_R)$. Since the stability subgroup at $(0,0,0)$ is $H$,
  then
${\rm SIM}(3)/H\simeq  [0,\pi)_*^2\times\mathbb R$ under the canonical
isomorphism  
$(\mathbf b,R,a)H\mapsto (\mathbf b,R,a).(0,0,0)$.
\par
We endow $\Xi$ with the measure ${\rm d}\xi=\sin{\varphi}\,{\rm d}\theta{\rm d}\varphi{\rm d}t$, where ${\rm d}\theta$, ${\rm d}\varphi$ and ${\rm d} t$ are the Lebesgue measure on $[0,\pi)$ and $\mathbb{R}$, respectively. 
It is easy to verify that ${\rm d}\xi$ is a relatively ${\rm SIM}(3)$-invariant measure on $\Xi$ with positive character $\beta(\mathbf b,R,a)=a$.

\subsubsection{The representations}
The group ${\rm SIM}(3)$ acts on $L^2(\mathbb R^3)$ by means of
the unitary  representation $\pi$ defined by 
\begin{equation}\label{GArepresentationpi}
\pi(\mathbf b,R,a)f(\mathbf
x)=a^{-\frac{3}{2}}f(a^{-1}R^{-1}(\mathbf{x-b})),
\end{equation}
The dual action $\mathbb R^3\times K\ni(\eta,k)\mapsto {^tk\eta}$ has a single open orbit $\mathcal{O}=\mathbb{R}^3$ for $\mathbf e_3$ of full measure and the stabilizer $K_{\mathbf e_3}\simeq {\rm SO}(2)\times\{1\}$ is compact. Then, the representation 
$\pi$ is irreducible and square-integrable, see \cite{GAantomure96}.

Furthermore, the quasi-regular representation $\hat{\pi}$ of ${\rm SIM}(3)$ acting on $L^2(\Xi,{\rm d}\xi)$ as
\[
\hat{\pi}(\mathbf b,R,a)F(\theta,\varphi,t)=a^{-\frac{1}{2}}F\left(\theta_{R^{-1}},\varphi_{R^{-1}},R^{-1}\mathbf n(\theta,\varphi)\cdot \mathbf n(\theta_{R^{-1}},\varphi_{R^{-1}})\frac{t-\mathbf n(\theta,\varphi)\cdot\mathbf b}{a}\right),
\]
is irreducible, too.  As a consequence, Theorem~\ref{GAthm:main} guarantees that the map $\xi\mapsto\hat\xi$ is injective. Let us consider again the situation with the choice $ H' = (\mathbb
R^2\times\{0\}) \rtimes  (\operatorname{SO}(2) \times \mathbb
R_+)$. In this case, $\Xi'=\rm {SIM}(3)/H'$ may be identified 
with $ S^2\times\mathbb R$, 
and we have already observed that the map
\[
\xi'=(\mathbf n,t)\mapsto \hat\xi_{\mathbf n,t}=\{\mathbf x\in\mathbb
R^3: \mathbf n\cdot \mathbf x=t\}
  \]
is not injective, since $(\mathbf n_1,t_1)$ and $(\mathbf n_2,t_2)$ identify the same plane if
\begin{equation}\label{GAeq:planes}
(\mathbf n_1,t_1)=\pm(\mathbf n_2,t_2).
\end{equation}
In the notation of Section~\ref{GAdpi}, this corresponds to
 $j(\mathbf n_1,t_1)=j(\mathbf n_2,t_2)$. 
According to Theorem~\ref{GAthm:main}, this implies that the corresponding quasi-regular representation $\widehat{\pi'}$ cannot be irreducible. Let us verify this explicitly, in order to visualise the link between the irreducibility of $\widehat{\pi'}$ and the injectivity of $\xi'\mapsto\widehat{\xi'}$ in this example. By arguing as above, it is easy to prove that 
\[
\widehat{\pi'}(\mathbf b,R,a)F(\mathbf n,t)=a^{-\frac{1}{2}}F\left(R^{-1}\mathbf n,\frac{t-\mathbf n\cdot\mathbf b}{a}\right).
\]
Thus, using the notation of Lemma~\ref{GAlem5}, the set
\[
L^2(\Xi',d\xi')_0=\{F\in L^2(\Xi',d\xi'): F(\mathbf n_1,t_1)=F(\mathbf n_2,t_2) \text{ if \eqref{GAeq:planes} holds}\}
\]
is a closed $\widehat{\pi'}$-invariant proper subspace of $L^2(\Xi',d\xi')$. Hence, 
$\widehat{\pi'}$ is not irreducible.

\subsubsection{The Radon transform}

In order to define the Radon transform we need to endow each
$\hat{\xi}$ with a suitable measure. Since the measure $m_0$ is
$H$-relatively invariant, the choice of the representative of $\xi$ is
crucial. We fix the  Borel section
$$\sigma\colon \Xi\to G,\qquad \sigma(\theta,\varphi,t)=(t\mathbf n(\theta,\varphi), R_{\theta,\varphi}, 1),$$
with $R_{\theta,\varphi}\in {\rm SO}(3)$ such that $R_{\theta,\varphi}\mathbf e_3=\mathbf n(\theta,\varphi)$.
We observe that, since $\gamma$ extends to a positive character of $G$, assumption (A4) is implied by the stronger condition $\gamma(\sigma(\xi))=1$ for every $\xi\in\Xi$.
Then, we compute the Radon transform by \eqref{GAeq:radondualpairs} obtaining 
\begin{align}\label{GAeq:radontransformplanes}
\mathcal R f(\theta,\varphi,t)
&=\int\limits_{\mathbb R^2}  f\bigl(t\mathbf n(\theta,\varphi)+R_{\theta,\varphi}(x,y,0)\bigr)  {\rm d}x {\rm d}y,
\end{align}
which is the integral of $f$ on the plane of equation $\mathbf{n}(\theta,\varphi)\cdot\mathbf{x}=t$.  As a consequence of Fubini theorem, equation~\eqref{GAeq:radontransformplanes} makes sense for instance if $f\in L^1(\mathbb R^3)$. 

We recall a crucial result in Radon transform theory in its standard version, known as Fourier slice theorem. We denote by $\operatorname{I}$ the identity operator.
  \begin{proposition}\label{GA:fst}
For every  $f\in L^1(\mathbb R^3)$ 
\begin{equation}
(I\otimes\mathcal{F})\mathcal{R}f(\theta,\varphi,\tau)=\mathcal{F}f(\tau \mathbf n(\theta,\varphi)),
\end{equation}
for all $(\theta,\varphi,\tau)\in [0,\pi)_*^2\times\mathbb R$.
\end{proposition}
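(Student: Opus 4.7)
The plan is to recognize the statement as the classical Fourier slice theorem adapted to the present section $\sigma$, and to reduce it to a three-dimensional Fourier integral by a single orthogonal change of variables. The key observation is that the section has been chosen precisely so that $R_{\theta,\varphi}\mathbf e_3=\mathbf n(\theta,\varphi)$, and consequently the parametrization $(x,y,t)\mapsto t\mathbf n(\theta,\varphi)+R_{\theta,\varphi}(x,y,0)$ of the $(\theta,\varphi,t)$-plane is nothing but the global rotation $R_{\theta,\varphi}(x,y,t)$, an isometry of $\mathbb R^3$ with unit Jacobian.

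First I would unfold the definition of the inner Fourier transform in $t$ together with \eqref{GAeq:radontransformplanes} and write
\[
(I\otimes\mathcal F)\mathcal Rf(\theta,\varphi,\tau)=\int\limits_{\mathbb R}\int\limits_{\mathbb R^2}f\bigl(t\mathbf n(\theta,\varphi)+R_{\theta,\varphi}(x,y,0)\bigr)\,\E^{-2\pi i\tau t}\D x\D y\D t.
\]
Since $f\in L^1(\mathbb R^3)$ and $(x,y,t)\mapsto R_{\theta,\varphi}(x,y,t)$ is volume-preserving, the function $(x,y,t)\mapsto|f\bigl(R_{\theta,\varphi}(x,y,t)\bigr)|$ is integrable on $\mathbb R^3$, so Fubini's theorem applies and the iterated integral makes sense. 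This also justifies a posteriori that the inner planar integral defining $\mathcal Rf(\theta,\varphi,t)$ is finite for almost every $t$.

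Next I would perform the change of variable $\mathbf z=R_{\theta,\varphi}(x,y,t)=R_{\theta,\varphi}(x,y,0)+t\mathbf n(\theta,\varphi)$, whose Jacobian is $1$ and which satisfies $t=\mathbf z\cdot\mathbf n(\theta,\varphi)$ by orthogonality of $R_{\theta,\varphi}$ together with $R_{\theta,\varphi}\mathbf e_3=\mathbf n(\theta,\varphi)$. Substituting gives
\[
(I\otimes\mathcal F)\mathcal Rf(\theta,\varphi,\tau)=\int\limits_{\mathbb R^3}f(\mathbf z)\,\E^{-2\pi i\tau\mathbf z\cdot\mathbf n(\theta,\varphi)}\D\mathbf z=\mathcal Ff(\tau\mathbf n(\theta,\varphi)),
\]
which is the desired identity. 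There is no real obstacle in the argument: the only delicate point is to check that the linear substitution does collapse the planar integration and the $t$-integration into a single Euclidean integral, and this is precisely what the choice $R_{\theta,\varphi}\mathbf e_3=\mathbf n(\theta,\varphi)$ built into the section $\sigma$ guarantees.
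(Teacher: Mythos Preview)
Your argument is correct and is the standard derivation of the Fourier slice theorem: unfold the definitions, apply Fubini (legitimate because $|f\circ R_{\theta,\varphi}|\in L^1(\mathbb R^3)$), and collapse the triple integral via the orthogonal change of variables $\mathbf z=R_{\theta,\varphi}(x,y,t)$, using $R_{\theta,\varphi}\mathbf e_3=\mathbf n(\theta,\varphi)$ to recover $t=\mathbf z\cdot\mathbf n(\theta,\varphi)$ in the phase.

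There is nothing to compare against in the paper: Proposition~\ref{GA:fst} is stated there without proof, introduced only as ``a crucial result in Radon transform theory in its standard version, known as Fourier slice theorem.'' Your write-up is exactly the proof one would expect to fill that gap.
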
  
Here the Fourier transform on the right-hand side is in $\mathbb R^3$, whereas the
operator $\mathcal F$ on the left-hand side is one-dimensional and acts on the
variabile~$t$. We repeat this slight abuse of notation in other
formulas below.

We show that assumption~\ref{GAass:A} holds true.  Let $S^2$ be the sphere in $\mathbb R^3$ and denote by $\mathcal{S}(\mathbb R^3)$ and $\mathcal{S}( S^2\times \mathbb R )$ the Schwartz spaces of rapidly decreasing functions on $\mathbb R^3$ and on $ S^2\times \mathbb R $, respectively, and by $\mathcal{S}'(\mathbb R^3)$ and  $\mathcal{S}'(S^2\times \mathbb R)$  the corresponding spaces of tempered distributions; see  \cite[Chapter 1.2]{GAhelgason99} for the definition on $S^2\times \mathbb R$.

We extend the Radon transform $\mathcal{R}$ as an even function on $S^2$ and we denote it by $\mathcal{R}_{e}$, i.e.
\begin{equation*}
\mathcal{R}_{e}f(\mathbf u,t)=\mathcal{R} f(\theta_{\mathbf u}, \varphi_{\mathbf u}, \mathbf{u}\cdot\mathbf{n}(\theta_{\mathbf u}, \varphi_{\mathbf u}) t),
\end{equation*}
where $(\theta_{\mathbf u}, \varphi_{\mathbf u})\in [0,\pi)_*^2$ is such that $\mathbf{n}(\theta_{\mathbf u}, \varphi_{\mathbf u})=\pm \mathbf{u}$.

We recall that, since $\mathcal R_e$ is a continuous map from $\mathcal{S}(\mathbb R^3)$ into $\mathcal{S}(S^2\times\mathbb{R})$ (see \cite{GAhelgason65}), given $F\in \mathcal{S}'(S^2\times\mathbb{R})$, the tempered distribution $\mathcal R_e^{\#}F\colon\mathcal{S}(\mathbb R^3)\to\mathbb C$ given by
\begin{equation*}
\langle \mathcal R_e^{\#}F, f \rangle=\langle F, \mathcal R_e f\rangle
\end{equation*}
is well-defined. If $F\in \mathcal{S}(S^2\times\mathbb{R})$, by Theorem 1.4 in \cite[Chapter 2]{GAnatterer}, the tempered distribution $\mathcal{F}\mathcal R_e^{\#}F$ is represented by the function 
\begin{equation}\label{FBeqn:freqexpressiondualradon}
\mathcal{F}\mathcal R_e^{\#}F(\mathbf v)=|\mathbf v|^{-2}[(I\otimes\mathcal{F})F(\mathbf v/|\mathbf v|,|\mathbf v|)+(I\otimes\mathcal{F})F(-\mathbf v/|\mathbf v|,-|\mathbf v|)].
\end{equation}
By equation \eqref{FBeqn:freqexpressiondualradon}, $\mathcal R_e^{\#}F$
is in $L^2(\mathbb R^3)$ provided that
\begin{equation}
\int\limits_{\mathbb R}t^m F(\mathbf{u},t)\D t=0,\qquad 
m\in\mathbb{N}. \label{eq:1}
\end{equation}
We fix a non-zero $F\in \mathcal{S}(S^2\times\mathbb{R})$ which
satisfies~\eqref{eq:1} and the symmetry condition $F(\mathbf{u},t)=F(-\mathbf{u},-t)$ and we denote  its restriction to $[0,\pi)_*^2\times\mathbb{R}$ by $F_0$, that is
\begin{equation*}
F_0(\theta,\varphi,t)=F(\mathbf{n}(\theta,\varphi),t),
\end{equation*}
for every $(\theta,\varphi,t)\in [0,\pi)_*^2\times\mathbb{R}$. Then, there exists a positive constant $C$ such that
\begin{equation*}
|\langle F_0, \mathcal R f\rangle_{L^2( [0,\pi)_*^2\times\mathbb{R})}|=\frac{1}{2}|\langle F, \mathcal R_e f\rangle_{L^2(S^2\times\mathbb{R})}|=|\langle \mathcal R_e^{\#}F, f \rangle|\leq C\|f\|,
\end{equation*}
for any $f\in \mathcal{S}(\mathbb R^3)$. Therefore, if we take $f_0\in
\mathcal{S}(\mathbb R^3)$ and define the vector subspace $\mathcal
A=\operatorname{span}\{\pi(g) f_0:g\in G
\}\subseteq\mathcal{S}(\mathbb R^3)$, then the domain of the adjoint
of the restriction of $\mathcal R$ to $\mathcal A$ is non-trivial
since $F_0\in\operatorname{dom}(\mathcal R^*)$ and assumption~\ref{GAass:A} holds true.

\subsubsection{The unitarization theorem}
By Theorem~\ref{ernestoduflomoore}, the Radon transform
    $\mathcal{R}\colon\mathcal A\to L^2(\Xi,{\rm d}\xi)$ admits a
    unique closure $\overline{\mathcal{R}}$ which satisfies 
    \begin{equation}
      \label{character}
      \overline{\mathcal{R}}\pi(\mathbf b, R, a) =\chi(\mathbf b,R,a)^{-1}\hat{\pi}(\mathbf b, R, a) \overline{\mathcal{R}},\qquad (\mathbf b, R, a)\in G, 
    \end{equation}
     where $\chi(\mathbf b,R,a)=a$ since $\alpha(\mathbf b, R, a)=a^3$, $\beta(\mathbf b, R, a)=a$ and $\gamma(\mathbf b, R, a)=a^2$. Furthermore, there exists a unique positive self-adjoint operator
    \[ \mathcal{I}\colon \operatorname{dom}(\mathcal{I}) \supseteq
      \operatorname{Im}\overline{\mathcal{R}}\to L^2(\Xi,{\rm d}\xi),
    \]
    semi-invariant with weight $\chi(\mathbf b, R, a)^{-1}=a^{-1}$ with the property
    that the composite operator $\mathcal I \overline{\mathcal{R}}$ extends
    to a unitary operator
    $\mathcal Q\colon L^2(X,{\rm d}x)\to L^2(\Xi,{\rm d}\xi)$ intertwining
    $\pi$ and $\hat{\pi}$, namely
    \begin{equation}\label{intertwiningU}
      \hat{\pi}(g)\mathcal Q\pi(g)^{-1}=\mathcal Q,
      \qquad g\in G.
    \end{equation}
    We can provide an explicit formula for $\mathcal I$.

Consider the subspace
\[
\mathcal D=\{f\in L^2( [0,\pi)_*^2\times\mathbb R):\int\limits_{ [0,\pi)_*^2\times\mathbb R}|\tau|^2|(I\otimes\mathcal F) f(\theta,\varphi,\tau)|^2\sin{\varphi}\,{\rm d}\theta{\rm d}\varphi{\rm d}\tau<+\infty\}
\]
and define the operator $\mathcal J\colon\mathcal D\to L^2( [0,\pi)_*^2\times\mathbb R)$ by
\begin{equation}\label{eq:operatorJ2}
(I\otimes\mathcal F)\mathcal J f(\theta,\varphi,\tau)=|\tau|(I\otimes\mathcal F) f(\theta,\varphi,\tau),
\end{equation}
a Fourier multiplier with respect to the variable $t$. A direct calculation shows that
$\mathcal J$ is a densely defined positive self-adjoint 
injective operator  and is semi-invariant with weight $\zeta(g)=\chi(g)^{-1}=a^{-1}$. By \cite[Theorem 1]{GAdumo76}, there exists $c>0$ such that $\mathcal I= c
\mathcal J$ and we now show that $c=1$.
Take a non-zero function $f\in \mathcal A$. Then, by Plancherel theorem
and Proposition~\ref{GA:fst} we have that 
\begin{align*}
\|f\|^2=\|\mathcal I\mathcal{R}f\|^2_{L^2( [0,\pi)_*^2\times\mathbb R)}&=c^2\|(I\otimes\mathcal F)\mathcal J\mathcal{R}f\|^2_{L^2( [0,\pi)_*^2\times\mathbb R)}\\
&=c^2\, \int\limits_{[0,\pi)_*^2\times\mathbb R}|(I\otimes\mathcal F)\mathcal{R}f(\theta,\varphi,\tau)|^2|\tau|^2\sin{\varphi}\,\D\theta{\rm d}\varphi\D\tau\\
&=c^2\, \int\limits_{[0,\pi)_*^2\times\mathbb R}|\mathcal F f(\tau \mathbf n(\theta,\varphi))|^2|\tau|^2\sin{\varphi}\,\D\theta{\rm d}\varphi\D\tau\\
&=c^2\|f\|^2.
\end{align*}
Thus, we obtain $c=1$. 
\subsubsection{The inversion formula}
By Theorem~\ref{GAgeneralinversionformula}, for any $f\in\mathcal A$ we have the reconstruction formula 
\begin{equation*}
f=\int\limits_{\mathrm{SIM}(3)}a^{-\frac{9}{2}}\langle\overline{\mathcal{R}}f,\hat{\pi}(\mathbf b, R, a)\Psi\rangle_{L^2(\Xi,{\rm d}\xi)}\psi(a^{-1}R^{-1}(\mathbf{x-b})){\rm d}\mathbf b{\rm d}R{\rm d}a,
\end{equation*}
where the integral is weakly convergent and where we used that $\chi(\mathbf b,R,a)=a$, the expression of the Haar measure of SIM$(3)$ given in \eqref{GAHaarmeasure} and the expression of $\pi$ given in \eqref{GArepresentationpi}. 

\subsection{The X-ray transform.}\label{GAxrt} 

The X-ray transform in the Euclidean 3-space maps a function on $\mathbb R^3$ into the set of integrals over the lines and the X-ray reconstruction problem consists in reconstructing a signal $f$ by means of its line integrals.

\subsubsection{Groups and spaces}
Take the same group $G={\rm SIM}(3)$ as in subsection~\ref{GAex:SIM},
namely $G=\mathbb R^3\rtimes K$, with $K=\{aR\in \operatorname{GL}(3,\mathbb R):R\in \operatorname{SO}(3),a\in \mathbb R_+\}$. Firstly, we choose $X=\mathbb R^3$ and, for what concerns this space, we keep the notation as in subsection~\ref{GAex:SIM}.
Then, we consider the space $\Xi=G/H$, where $ H = (\{(0,0)\}\times\mathbb R) \rtimes  (\times \mathbb R_+)$. By \eqref{GAeq:21}, the root manifold is then 
\[
 \hat{\xi_0} =\{ t
                \mathbf e_3\colon t\in \mathbb R\}
\]
and it is easy to verify that $m_0={\rm d}t$ is a relatively $H$-invariant measure on $\hat{\xi}_0$ with character $\gamma(\mathbf b,R,a)=a$.
Furthermore, for each $\xi=(\mathbf b,R,a)H\in\Xi$, by \eqref{GAeq:31} we compute 
\[
\hat{\xi}=(\mathbf b,R,a)[\hat{\xi}_0]=\{tR\mathbf e_3+\mathbf b\colon t\in \mathbb R\},
\]
which is the line parallel to the vector $R\mathbf{e}_3$ and passing through the point $\mathbf b$. It is worth observing that $H$ is the maximal closed subgroup of ${\rm SIM}(3)$ which 
satisfies $h[\hat{\xi}_0]=\hat{\xi_0}$ for every $h\in H$ and then, by Proposition~\ref{GAnovantesimo}, the map $\xi\to\hat\xi$ is injective. 

The coset space $\Xi={\rm SIM}(3)/H$ can be identified with the set $T=\{(\theta,\varphi,\mathbf t):(\theta,\varphi)\in[0,\pi)_*^2,\, \mathbf t\in (\theta,\varphi)^{\bot}\}$, where $(\theta,\varphi)^{\bot}$ denotes the plane passing through the origin and perpendicular to the vector $\mathbf n(\theta,\varphi)$, i.e. the plane of equation $\mathbf n(\theta,\varphi)\cdot\mathbf x=0$. The group ${\rm SIM}(3)$ acts on $T$ by the action
\begin{equation*}
(\mathbf b,R,a).(\theta,\varphi,\mathbf t)  = (\theta_R,\varphi_R,\mathbf t + aR \mathbf b - (\mathbf n(\theta_R,\varphi_R)\cdot(\mathbf t + aR \mathbf b))\mathbf n(\theta_R,\varphi_R)),
\end{equation*} 
where we recall that $(\theta_R,\varphi_R)\in [0,\pi)_*^2$ is such that $R\,\mathbf n(\theta,\varphi)=\pm\mathbf n(\theta_R,\varphi_R)$. Since the stability subgroup at $(0,0,0)$ is $H$,
  then
${\rm SIM}(3)/H\simeq T$ under the canonical
isomorphism  
$(\mathbf b,R,a)H\mapsto (\mathbf b,R,a).(0,0,0)$.

We endow $\Xi$ with the measure ${\rm
  d}\xi=\sin{\varphi}\,\D\theta{\rm d}\varphi{\rm d}\mathbf t$, with
${\rm d}\theta$, ${\rm d}\varphi$ and ${\rm d} \mathbf t$ 
  being the Lebesgue measure on $[0,\pi)$ and $\mathbb{R}^3$, respectively. It is easy to verify that ${\rm d}\xi$ is a relatively ${\rm SIM}(3)$-invariant measure on $\Xi$ with positive character $\beta(\mathbf b,R,a)=a^3$.
    
\subsubsection{The representations}
We recall that the group ${\rm SIM}(3)$ acts on $L^2(\mathbb R^3)$ by means of
the unitary irreducible representation $\pi$ defined by 
$$\pi(\mathbf b,R,a)f(\mathbf x)=a^{-\frac{3}{2}}f(a^{-1}R^{-1}(\mathbf{x-b})).$$

Furthermore, the quasi-regular representation $\hat{\pi}$ of ${\rm SIM}(3)$ acting on $L^2(\Xi,{\rm d}\xi)$ as
\begin{align*}&\hat{\pi}(\mathbf b,R,a)F(\theta,\varphi,\mathbf t)=\\
&a^{-\frac{3}{2}}F\left(\theta_{R^{-1}},\varphi_{R^{-1}},a^{-1}R^{-1}(\mathbf t-\mathbf b)-(\mathbf n(\theta_{R^-1},\varphi_{R^-1})\cdot a^{-1}R^{-1}(\mathbf t-\mathbf b))\mathbf n(\theta_{R^-1},\varphi_{R^-1})\right),
\end{align*}
is irreducible, too.
\subsubsection{The Radon transform}

We fix a Borel section 
$$\sigma\colon \Xi\to G,\qquad \sigma(\theta,\varphi, \mathbf t)=(\mathbf t, R_{\theta,\varphi}, 1),$$
with $R_{\theta,\varphi}\in SO(3)$ such that $R_{\theta,\varphi}\mathbf e_3=\mathbf n(\theta,\varphi)$.
We observe that, since $\gamma$ extends to a positive character of $G$, assumption (A4) is implied by the stronger condition $\gamma(\sigma(\xi))=1$ for every $\xi\in\Xi$.
Then, we compute by \eqref{GAeq:radondualpairs} the Radon transform between the ${\rm SIM}(3)$-transitive spaces $X$ and $\Xi$ obtaining 
\begin{align}\label{GAeq:rxraytransformplanes}
\mathcal R f(\theta,\varphi,\mathbf t)&
=\int\limits_{\mathbb R}  f(t\mathbf n(\theta,\varphi)+\mathbf t)  {\rm d}t,
\end{align}
which is the integral of $f$ over the line parallel to the vector $\mathbf n(\theta,\varphi)$ and passing through the point $\mathbf t\in\mathbb{R}^3$.  
Let us now determine a suitable $\pi$-invariant subspace $\mathcal A$ of $L^2(\mathbb R^3)$ as in (A7). In order to do that, it is useful to derive a Fourier slice theorem for $\mathcal R$.

 For any 
$f\in\mathcal{S}(\mathbb R^3)$, by Theorem 1.1 in \cite[Chapter 2]{GAnatterer}, we have 
\begin{equation}\label{GAeq:fourierslicetheoremxray}
(I\otimes\mathcal{F})\mathcal R f(\theta,\varphi,\mathbf v)=\mathcal{F}f(\mathbf v),\qquad \mathbf v\in(\theta,\varphi)^{\bot}.
\end{equation}
As a consequence, by Plancherel theorem and formula (2.8) in \cite[Chapter 7]{GAnatterer}, we obtain 
\begin{align*}
\|\mathcal{R}f\|_{L^2(\Xi)}^2&=\int_{0}^\pi\int_0^\pi\int\limits_{(\theta,\varphi)^{\bot}}|(I\otimes\mathcal{F})\mathcal R f(\theta,\varphi,\mathbf v)|^2\sin{\varphi}\,{\rm d}\mathbf v\D\theta{\rm d}\varphi\\
&=\int_{0}^\pi\int_0^\pi\int\limits_{(\theta,\varphi)^{\bot}}|\mathcal{F}f(\mathbf v)|^2\sin{\varphi}\,{\rm d}\mathbf v\D\theta{\rm d}\varphi\\
&=\int\limits_{\mathbb R^3}\frac{|\mathcal{F}f(\mathbf v)|^2}{|\mathbf v|}{\rm d}\mathbf v.
\end{align*}
By using spherical coordinates, we obtain
\begin{align*}
\|\mathcal{R}f\|_{L^2(\Xi)}^2&=\int_0^\pi\int_0^\pi\int\limits_{\mathbb R}|\mathcal{F}f(\tau n(\theta,\varphi))|^2|\tau|\sin{\varphi}\,{\rm d}\tau\D\theta{\rm d}\varphi\\
&\leq \int_0^\pi\int_0^\pi\int\limits_{|\tau|\leq1}|\mathcal{F}f(\tau n(\theta,\varphi))|^2\sin{\varphi}\,{\rm d}\tau\D\theta{\rm d}\varphi\\
&+ \int_0^\pi\int_0^\pi\int\limits_{|\tau|>1}|\tau||\mathcal{F}f(\tau n(\theta,\varphi))|^2\sin{\varphi}\,{\rm d}\tau\D\theta{\rm d}\varphi\\
&\leq 4\pi\|f\|_1^2+\|f\|_2^2<+\infty,
\end{align*}
which proves that $\mathcal{R}f\in L^2(\Xi)$ for any $f\in\mathcal{S}(\mathbb R^3)$ and we set $\mathcal{A}=\mathcal{S}(\mathbb R^3)$. Next, we show that $\mathcal{R}$, regarded as an operator from $\mathcal{A}$ to $L^2(\Xi)$ is closable.  By \cite[Theorem VIII.1]{GAreedsimon80}, this is equivalent to proving that the adjoint of $\mathcal{R}f\colon\mathcal{A}\to L^2(\Xi)$ is densely defined.
Suppose that $(f_n)_n\subseteq\mathcal{A}$ is a
sequence such that $f_n\to f$ in $L^2(\mathbb R^3)$ and $\mathcal R f_n\to
g$ in $L^2(\Xi)$. Since $I\otimes\mathcal{F}$ is unitary
from $L^2(\Xi)$ onto $L^2(\Xi)$, we have that $(I\otimes\mathcal{F})\mathcal R f_n\to(I\otimes\mathcal{F})g$ in $L^2(\Xi)$. Since
$f_n\in\mathcal{A}$, by \eqref{GAeq:fourierslicetheoremxray}, for  every
$(\theta,\varphi)\in [0,\pi)_*^2$ 
\begin{align*}
(I\otimes\mathcal{F})\mathcal R f_n(\theta,\varphi,\mathbf v)&=\mathcal{F} f_n(\mathbf v),\quad \mathbf v\in (\theta,\varphi)^{\bot}.
\end{align*}
Hence, passing to a subsequence if necessary,
\begin{align*}
\mathcal{F} f_n(\mathbf v)\to (I\otimes\mathcal{F})g(\theta,\varphi,\mathbf v)
\end{align*}
for almost every $(\theta,\varphi)\in [0,\pi)_*^2$ and $\mathbf v\in(\theta,\varphi)^{\bot}$. Therefore, for almost every $(\theta,\varphi)\in [0,\pi)_*^2$ and $\mathbf v\in(\theta,\varphi)^{\bot}$
\begin{align*}
(I\otimes\mathcal{F})g(\theta,\varphi,\mathbf v)=\lim_{n\to+\infty}\mathcal{F} f_n(\mathbf v)=\mathcal{F} f(\mathbf v),
\end{align*}
where the last equality holds true using a subsequence if necessary. Therefore, if $(h_n)_n\in\mathcal{A}$ is another sequence such that $h_n\to f$ in $L^2(\mathbb R^3)$ and $\mathcal R h_n\to h$ in $L^2(\Xi)$, then, for almost every $(\theta,\varphi)\in [0,\pi)_*^2$ and $\mathbf v\in(\theta,\varphi)^{\bot}$
\begin{align*}
(I\otimes\mathcal{F})h(\theta,\varphi,\mathbf v)=\mathcal{F} f(\mathbf v).
\end{align*}
Therefore,
\[
(I\otimes\mathcal{F})g(\theta,\varphi,\mathbf v)=(I\otimes\mathcal{F})h(\theta,\varphi,\mathbf v)
\]
for almost every $(\theta,\varphi)\in [0,\pi)_*^2$ and
$\mathbf v\in(\theta,\varphi)^{\bot}$. Then
$\lim_{n\to+\infty}\mathcal R f_n=\lim_{n\to+\infty}\mathcal R h_n$,
and $\mathcal R$ is closable. We denote its closure by $\overline{\mathcal R}$.
\subsubsection{The unitarization theorem}
By Theorem~\ref{ernestoduflomoore}, the Radon transform
    $\mathcal{R}\colon\mathcal A\to L^2(\Xi,{\rm d}\xi)$ admits a
    unique closure $\overline{\mathcal{R}}$ which satisfies 
    \begin{equation}
      \label{character}
      \overline{\mathcal{R}}\pi(\mathbf b, R, a) =\chi(\mathbf b,R,a)^{-1}\hat{\pi}(\mathbf b, R, a) \overline{\mathcal{R}},\qquad (\mathbf b, R, a)\in G, 
    \end{equation}
     where $\chi(\mathbf b,R,a)=a$ since $\alpha(\mathbf b, R, a)=a^3$, $\beta(\mathbf b, R, a)=a$ and $\gamma(\mathbf b, R, a)=a^2$. Furthermore, there exists a unique positive self-adjoint operator
    \[ \mathcal{I}\colon \operatorname{dom}(\mathcal{I}) \supseteq
      \operatorname{Im}\overline{\mathcal{R}}\to L^2(\Xi,{\rm d}\xi),
    \]
    semi-invariant with weight $\chi(\mathbf b, R, a)^{-1}=a^{-1}$ with the property
    that the composite operator $\mathcal I \overline{\mathcal{R}}$ extends
    to a unitary operator
    $\mathcal Q\colon L^2(X,{\rm d}x)\to L^2(\Xi,{\rm d}\xi)$ intertwining
    $\pi$ and $\hat{\pi}$, namely
    \begin{equation}\label{intertwiningU}
      \hat{\pi}(g)\mathcal Q\pi(g)^{-1}=\mathcal Q,
      \qquad g\in G.
    \end{equation}
    We can provide an explicit formula for $\mathcal I$. Consider the subspace
\[
\mathcal D=\{f\in L^2(\Xi):\int\limits_{ [0,\pi)_*^2\times(\theta,\varphi)^{\bot}}|\tau||(I\otimes\mathcal F) f(\theta,\varphi,\tau)|^2\sin{\varphi}\,{\rm d}\theta{\rm d}\varphi{\rm d}\tau<+\infty\}
\]
and define the operator $\mathcal J\colon\mathcal D\to L^2(\Xi)$ by
\begin{equation}\label{eq:operatorJ2}
(I\otimes\mathcal F)\mathcal J f(\theta,\varphi,\tau)=|\tau|^\frac{1}{2}(I\otimes\mathcal F) f(\theta,\varphi,\tau),
\end{equation}
a Fourier multiplier with respect to the variable $t$. A direct calculation shows that
$\mathcal J$ is a densely defined positive self-adjoint 
injective operator  and is semi-invariant with weight $\zeta(g)=\chi(g)^{-1}=a^{-\frac{1}{2}}$. By \cite[Theorem 1]{GAdumo76}, there exists $c>0$ such that $\mathcal I= c
\mathcal J$ and we now show that $c=1$.
Consider a non-zero function $f\in \mathcal A$. Then, by Plancherel theorem, equation \eqref{GAeq:fourierslicetheoremxray} and formula (2.8) in \cite[Chapter 7]{GAnatterer}, we obtain 
\begin{align*}
\|f\|^2=\|\mathcal I\mathcal{R}f\|^2_{L^2(\Xi)}&=c^2\|(I\otimes\mathcal F)\mathcal J\mathcal{R}f\|^2_{L^2(\Xi)}\\
&=c^2\, \int\limits_{ [0,\pi)_*^2\times(\theta,\varphi)^{\bot}}|(I\otimes\mathcal F)\mathcal{R}f(\theta,\varphi,\tau)|^2|\tau|\sin{\varphi}\,\D\theta{\rm d}\varphi\D\tau\\
&=c^2\, \int\limits_{ [0,\pi)_*^2\times(\theta,\varphi)^{\bot}}|\mathcal F f(\tau)|^2|\tau|\sin{\varphi}\,\D\theta{\rm d}\varphi\D\tau\\
&=c^2\|f\|^2.
\end{align*}
Thus, $c=1$ and this concludes the proof.

\subsubsection{The inversion formula}
By Theorem~\ref{GAgeneralinversionformula}, for any $f\in\mathcal{S}(\mathbb{R}^3)$, taking into account equations \eqref{GAHaarmeasure} and \eqref{GArepresentationpi} and that $\chi(\mathbf b,R,a)=a^{\frac{1}{2}}$, the reconstruction formula \eqref{GAinversionformula} reads
\begin{equation*}
f=\int\limits_{\mathrm{SIM}(3)}a^{-5}\langle\overline{\mathcal{R}}f,\hat{\pi}(\mathbf b, R, a)\Psi\rangle_{L^2(\Xi,{\rm d}\xi)}\psi(a^{-1}R^{-1}(\mathbf{x-b})){\rm d}\mathbf b{\rm d}R{\rm d}a,
\end{equation*}
where the integral is weakly convergent.

\begin{acknowledgement}
G.S. Alberti, F. De Mari and E. De Vito are members of the Gruppo Nazionale per
l'Analisi Matematica, la Probabilit\`a e le loro Applicazioni (GNAMPA)
of the Istituto Nazionale di Alta Matematica (INdAM) and together with F. Bartolucci are part of the Machine Learning Genoa Center (MaLGa).
\end{acknowledgement}


\begin{thebibliography}{99.}%

\bibitem{GAsiam}
G. S.~Alberti, F.~Bartolucci, F.~De~Mari, E.~De~Vito.
Unitarization and Inversion Formulae for the Radon Transform between Dual Pairs. 
\textit{SIAM J. Math. Anal.} \textbf{51} (2019), no. 2, 4356--4381.


\bibitem{GAantomure96}
J.P. Antoine and R. Murenzi. 
Two-dimensional directional wavelets and the scale-angle representation.
{\em Signal Processing} \textbf{52} (1996), no. 3, 259--281.

\bibitem{GAbardemadeviodo}
F.~Bartolucci, F.~De~Mari, E.~De~Vito, F.~Odone. 
The {R}adon transform intertwines wavelets and shearlets.
\textit{Applied and Computational Harmonic Analysis} \textbf{47} (2019), no. 3, 822--847.


 \bibitem{GAberenstein-walnut-1994}
C.~Berenstein and D.~Walnut.
Local inversion of the {R}adon
transform in even dimensions using wavelets. 
75 years of {R}adon transform, {V}ienna, 1992, Conf. Proc. Lecture Notes Math. Phys., IV, Int. Press, Cambridge, MA (1994), 45--69.
 
  
\bibitem{GAbourbaki}
{N. Bourbaki}. {\em Integration II }.  Springer-Verlag, Berlin (2004).

\bibitem{GAchern42}
S.~Chern. On integral geometry in Klein spaces. {\em Ann. of Math. (2)} \textbf{43} (1942), 178--189.

\bibitem{GAdumo76}
M.~Duflo and C.~C. Moore. On the regular representation of a
  nonunimodular locally compact group. {\em J. Functional Analysis}
\textbf{21}  (1976), no. 2,  209-243.

\bibitem{GAfuhr05}
 H.~F{\"u}hr. {\em Abstract harmonic analysis of continuous wavelet
  transforms}. Springer, Berlin (2005).

\bibitem{GAfuhr10}
H.~F{\"u}hr. Generalized {C}alder\'on conditions and regular orbit spaces. {\em Colloq. Math. (1)} \textbf{120} (2010), 103--126.

\bibitem{GAgel60}
I. M. Gel'fand.
Integral geometry and its relation to the theory of
representations. {\em Russian Math. Surveys} \textbf{15} (1960), no. 2, 143--151.

\bibitem{GAhelgason65}
S.~Helgason. The {R}adon transform on {E}uclidean spaces, compact
  two-point homogeneous spaces and {G}rassmann manifolds. {\em Acta Math.} \textbf{113}
  (1965), 153--180.

\bibitem{GAhelgason94}
S.~Helgason. {\em Geometric analysis on symmetric spaces}. 
  Mathematical Surveys and Monographs, American Mathematical Society \textbf{39},
  Providence, RI, 2nd~ed. (2008).

\bibitem{GAhelgason99}
S.~Helgason. {\em The {R}adon transform}. Progress in
  Mathematics \textbf{5}, Birkh\"auser Boston, Inc., Boston, MA, 2nd~ed. (1999).

\bibitem{GAhelgason13}  
S.~Helgason. Some personal remarks on the Radon transform.
{\em Geometric analysis and integral geometry.} Contemp. Math. \textbf{598}, Amer. Math. Soc., Providence, RI (2013), 3--19.

 \bibitem{GAholschneider91}
M.~Holschneider. Inverse {R}adon transforms through inverse wavelet
  transforms. {\em Inverse Problems} \textbf{7} (1991), 853--861. 

\bibitem{GAmadych-1999}
W.~R. Madych. Tomography, approximate reconstruction, and continuous
  wavelet transforms. {\em Appl. Comput. Harmon. Anal.} \textbf{7} (1999), 54--100.

\bibitem{GAmostert}
S.~Mostert~Paul. Sections in principal fibre spaces. 
{\em Duke Mathematical Journal} \textbf{23} (1956), 57--71.

\bibitem{GAnatterer}
F.~Natterer. {\em The mathematics of computerized tomography}. SIAM (2001).


\bibitem{GAolson-destefano-1994}
T.~Olson and J.~DeStefano. Wavelet localization of the radon
  transform. {\em IEEE Transactions on Signal Processing} \textbf{42} (1994), 2055--2067.
  

  \bibitem{GAradon17} 
J.~Radon. \"Uber die Bestimmung von Funktionen durch ihre 
  Integralwerte l\"angs gewisser Manigfaltigkeiten. {\em Ber. Ver. S\"achs. Akad. Wiss. Leipzig, Math Nat. Kl.} \textbf{69} (1917), 262--277.

\bibitem{GAreedsimon80} 
M.~Reed and B. Simon. {\em Methods of Modern Mathematical Physics. I: Functional Analysis}. Academic Press, New York, 2nd~ed. (1980). 

\bibitem{GArubin98}
B.~Rubin. The {C}alder\'{o}n reproducing formula, windowed {X}-ray
  transforms, and {R}adon transforms in {$L^p$}-spaces. 
  {\em J. Fourier Anal. Appl.} \textbf{4} (1998), 175--197.

\bibitem{GAsolmon76}
D.~C.~Solmon. The X-Ray Transform.
{\em J. Math. Anal. Appl.} \textbf{56} (1976), no. 1, 61--83.

\bibitem{GAraja}
V.~S. Varadarajan. {\em Lie Groups, Lie Algebras and Their
  Representations}. Springer-Verlag, New
  York, 2nd~ed. (1984).  
  
\bibitem{GAvaradarajan85}
V.~S. Varadarajan. {\em Geometry of quantum theory}. Springer-Verlag, New
  York, 2nd~ed. (1985).

\bibitem{GAwalnut-1993}
D.~Walnut. Local inversion of the Radon transform in the plane using
  wavelets. {\em Proc. SPIE} \textbf{2034} (1993), 84--90.

\bibitem{GAyarman07}
C.~E. Yarman and B.~Yazici. Euclidean motion group representations
  and the singular value decomposition of the {R}adon transform. {\em Integral
  Transforms Spec. Funct.} \textbf{18} (2007), 59--76.

  
  
%
%
%
%
%
%
%
%
%
%
%
%
%
%
%
%
%
%
%

%
%
%

%
%
%
%
%
%
%
%
%
%
%
%
%
%

%
%
%
%
%
%
%
%
%


\end{thebibliography}
\end{document}